\def\bA{\mathbb{A}}
\def\bC{\mathbb{C}}
\def\bQ{\mathbb{Q}}
\def\bZ{\mathbb{Z}}
\def\cO{\mathcal{O}}
\def\CO{\cO}
\def\fp{\mathfrak{p}}
\def\GL{\operatorname{GL}}
\def\Gal{\operatorname{Gal}}
\def\id{\operatorname{id}}
\newcommand{\cmfield}{K}
\newcommand{\realfield}{\cmfield^+}
\newcommand{\OK}{\CO_\cmfield}
\newcommand{\IC}{\bC}
\newcommand{\ZZ}{\bZ}
\newcommand{\IQ}{\bQ}
\newcommand{\adeles}{\bA}
\newcommand{\isomto}{\overset{\sim}{\rightarrow}}
\newcommand{\subDelta}{{_\Delta}}
\newcommand{\subGamma}{{_\Gamma}}
\newcommand{\central}{s_0}
\newcommand{\critical}{s_1}
\newcommand{\normcharacter}{\mathcal{N}}
\newcommand{\fixedchar}{\mathscr{C}}
\newcommand{\measurecritical}{0}
\newcommand{\fixedconductor}{\mathfrak{m}}
\newcommand{\anticycchar}{\mathscr{A}}
\newtheorem{thm}{Theorem}
\numberwithin{thm}{section}
\newtheorem{cor}[thm]{Corollary}
\newtheorem{lem}[thm]{Lemma}
\newtheorem{prop}[thm]{Proposition}
\theoremstyle{definition}
\newtheorem{cond}[thm]{Condition}
\newtheorem{example}[thm]{Example}
\theoremstyle{remark}
\newtheorem{rmk}[thm]{Remark}
\numberwithin{equation}{section}
\title[Twisted $L$-functions]{Applications of nonarchimedean developments\\
 to  archimedean nonvanishing results\\ 
 for twisted $L$-functions}
\author[E. E. Eischen]{E. E. Eischen}
\thanks{This research was partially supported by NSF Grant DMS-1559609 and NSF CAREER Grant DMS-1751281.}
\address{E. Eischen\\
Department of Mathematics\\
University of Oregon\\
Fenton Hall\\
Eugene, OR 97403-1222\\
USA}
\email{eeischen@uoregon.edu}
\begin{document}
\bibliographystyle{halpha}

\newpage
\setcounter{page}{1}
\maketitle
\vspace{-0.2in}
\begin{abstract}

We prove the nonvanishing of the twisted central critical values of a class of automorphic $L$-functions for twists by all but finitely many unitary characters in particular infinite families.  While this paper focuses on $L$-functions associated to certain automorphic representations of unitary groups, it illustrates how decades-old nonarchimedean methods from Iwasawa theory can be combined with the output of new machinery to achieve broader nonvanishing results.  In an appendix, which concerns an intermediate step, we also outline how to extend relevant prior computations for $p$-adic Eisenstein series and $L$-functions on unitary groups to the case where primes dividing $p$ merely needs to be unramified (whereas prior constructions required $p$ to split completely) in the associated reflex field. 
\end{abstract}

%\setcounter{tocdepth}{4}
%\tableofcontents

\section{Introduction}

From the proof of Dirichlet's theorem on arithmetic progressions of prime numbers \cite{dirichlet} to Shimura's proof of algebraicity of certain values of $L$-functions \cite{shimura-hilbert} to the class number formula for number fields to the Birch and Swinnerton-Dyer conjecture and Bloch--Kato conjectures \cite{BK, fontaine-PR, SkiUrb, CGH}, 
nonvanishing of $L$-functions at certain points has significant consequences.  

This paper concerns the (non)vanishing at the {\it central critical value} of $L$-functions associated to certain cuspidal automorphic representations $\pi$ of a unitary group twisted by Hecke characters $\chi$, but the methods (outside of the Appendix) are applicable to $L$-functions associated to a wider class of data.  (The Appendix, however, only applies to unitary groups, as it explains how several very recent developments enable removal of certain technical conditions, in particular removal of the requirement that $p$ split, from earlier nonarchimedean results for automorphic forms and $L$-functions associated to unitary groups.)  In analogue with the Birch and Swinnerton-Dyer conjecture, the Bloch--Kato conjectures predict the algebraic meaning (in terms of ranks of certain modules $M$ associated to $\pi$) of the vanishing at critical points (i.e. as defined in \cite{deligne}, so that the gamma factors occurring in the functional equation for the $L$-function do not have poles).  In the framework of the Bloch--Kato conjectures, twists by $\chi$ correspond (conjecturally) to certain $\chi$-eigenspaces of $M$ (viewed as a Galois module, with $\chi$ viewed as a character on a Galois group).

The values at the central critical point are generally particularly challenging to study.  For example, the $L$-functions associated to tempered cuspidal automorphic representations on unitary groups are known to be nonzero at critical points to the right of the central critical point, but the behavior at the central critical point largely remains mysterious.  Methods in analytic number theory often focus on proving results about averages of central values of twists of $L$-functions by characters $\chi$ as those characters vary in certain families.

Even though the results in this paper concern $\IC$-valued meromorphic $L$-functions and one often uses (archimedean) analytic methods to study the behavior of the twisted central critical values, the proofs in this paper mostly exploit $p$-adic methods, building on an idea of R.\ Greenberg in \cite{gr1}.  This paper illustrates how decades-old methods from Iwasawa theory can be combined with the output of new machinery to achieve nonvanishing results about functions arising in analytic number but which current (archimedean) analytic number theoretic methods cannot address (at least, as experts have indicated to me).

As such, we aim in the present paper to present the results and techniques in a reasonably accessible manner to someone who is interested in the $\IC$-valued $L$-functions under consideration but not necessarily an expert in the $p$-adic theory.  While many of the references from which we extract key ingredients require a background in Iwasawa theory, unitary Shimura varieties, or related topics, the reader of the present paper need not be an expert in them.

\subsection{Variation of Dirichlet characters and more}
B.\ Mazur distinguished in \cite{mazur-warsaw} between two natural types of variation of $\chi$: {\it horizontal variation}, with $\chi$ ranging over characters of fixed order (varying the conductor, e.g.\ \cite{BFH, FrHo, OnSk}), and {\it vertical variation}, varying $\chi$ over finite order characters with conductor divisible by some fixed finite set of primes.  This paper concerns vertical variation, as we vary over conductors dividing $p^\infty$ (or, more generally,  $p^\infty\fixedconductor$ for some fixed conductor $\fixedconductor$ prime to $p$) for an odd prime $p$.

In the setting of this paper, one could consider a third type of variation, which we leave for future work: For $\chi$ fixed, we may vary $\pi$ analogously to vertical variation of $\chi$, at least if $\pi$ is {\it ordinary} at a prime $p$ and parametrized by tuples of characters like in \cite{HELS}.  In the inspiration for the present paper, Greenberg's vertical variation of $\chi$ in $L$-functions associated to certain CM Hecke characters \cite{gr1}, this third type of variation does not exist (or at least, amounts to vertical variation of $\chi$), since representations of $\GL_1$ are characters.

\subsection{Main results}
The main results concern the nonvanishing of the twisted central critical values of certain automorphic $L$-functions for twists by all but finitely many unitary characters in particular infinite families.  Some of the propositions and lemmas proved en route to these results are also interesting and,  if one assumes certain conjectures of J.\ Coates and B.\ Perrin-Riou \cite{coates, CoPR}, are applicable to $L$-functions attached to a wider class of data.  Indeed, as new integral representations of $L$-functions continue to become available, constructions of associated $p$-adic $L$-functions will likely enable immediate extension of the main results of the present paper to other representations (including via current, ongoing projects of the author).  While many of the results here can be extended to other groups, the Appendix concerns only unitary groups.

Let $\pi$ be a tempered cuspidal automorphic representation of a general unitary group preserving a hermitian form on a vector space over a quadratic imaginary field $K$, and let $\chi$ be a Hecke character on the id\`eles of $K$.  Fix an odd prime $p$ that splits in $K$.\footnote{In the Appendix, building on very recent developements \cite{brasca-rosso, EiMa, liuJussieu, liu-rosso}, we explain the extent to which we can remove this condition on $p$ and replace it with the condition that $p$ is merely unramified.}

Note that the central critical point $\central$ for $L(s, \pi, \chi)$ depends on the pair $(\pi, \chi)$ and is the same for $(\pi, \chi\chi')$ for any finite order character $\chi'$, since only the infinity type of $\chi$ affects the value of $\central$.  A Hecke character of type $A_0$ (in the sense of \cite{weil}) may be identified with a character on $G_K=\Gal(\bar{K}/K)$, which we also denote by $\chi$.  

As discussed in Section \ref{background-section}, there are certain extensions $K^\pm$ of $K$ with Galois group $\Gamma^\pm\cong\ZZ_p$, with $K^+$ denoting the {\it cyclotomic} $\ZZ_p$-extension (the unique extension of $K$ with Galois group isomorphic to $\ZZ_p$ contained in the extension obtained by adjoining all $p$-power roots of unity to $K$) and $K^-$ denoting the {\it anti-cyclotomic} $\ZZ_p$-extension (the compositum of the $p$-power cyclic extensions of $K$ that are dihedral over $\IQ$).  Characters of $\Gal(\bar{K}/K)$ that factor through $\Gamma^+$ (resp.\ $\Gamma^-$) are called {\it cyclotomic} (resp.\ {\it anti-cyclotomic}).  We let $\Gamma$ be the Galois group of the compositium $K^+K^-$ over $K$.

On the Galois group of the compositum of the cyclotomic $p$-power extensions of $K$, identified with $\ZZ_p^\times$ through the Galois action on the $p$-power cyclotomic extensions of $K$, there is a unique $(\ZZ/p\ZZ)^\times$-valued character $\omega$ (the Teichm\"uller character, which we view as a Hecke character) with the property that $\omega(a)\equiv a\mod p$ for each $a\in\ZZ_p^\times.$

\begin{thm}[Theorem \ref{ord-Thm}]\label{ord-Thm-intro}
Let $\pi$ be a representation meeting Condition \ref{cond-ord} (e.g.\ one of the infinitely many representations in \cite{HELS}, extended further by Section \ref{app-section} of the present paper).  For each critical point $\critical$ to the right of the central critical point $\central$ for $(\pi, \chi)$
\begin{align}\label{extra-term-intro}
L(\central, \pi, \chi\omega^{\central-\critical}\psi)\neq 0
\end{align}
for all but finitely many (of the infinitely many) finite order cyclotomic Hecke characters $\psi$ of $\Gamma^+$.  Moreover, for all but finitely many of those $\psi$, 
\begin{align*}
L(\central, \pi, \chi\omega^{\central-\critical}\psi\psi')\neq 0
\end{align*}
for all but finitely many unitary anti-cyclotomic characters $\psi'$ of $\Gamma^-$.
\end{thm}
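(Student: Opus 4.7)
The plan is to package the family of central critical values $\{L(\central,\pi,\chi\omega^{\central-\critical}\psi\psi')\}$ into a single two-variable $p$-adic $L$-function and deduce the stated nonvanishing from the nonvanishing of that element of the Iwasawa algebra via the Weierstrass preparation theorem, in the spirit of Greenberg \cite{gr1}. First, for $\pi$ satisfying Condition \ref{cond-ord}, I would invoke the existence of a measure
\[
\mu_{\pi,\chi} \in \Lambda_{\cO}(\Gamma) := \cO[[\Gamma^+\times\Gamma^-]],
\]
for a suitable finite extension $\cO/\ZZ_p$, whose integral against a finite order character $\psi\psi'$ of $\Gamma^+\times\Gamma^-$ reproduces, up to an explicit product of nonzero local factors at $p$ and at infinity together with a Deligne period, the algebraic part of $L(\central,\pi,\chi\omega^{\central-\critical}\psi\psi')$. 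Such a measure is obtained by pairing the ordinary $p$-adic family of \cite{HELS} against the $p$-adic Eisenstein measure associated to $\chi$, with the splitting assumption on $p$ replaced by the milder unramified hypothesis via the extensions explained in the Appendix; the Teichm\"uller twist $\omega^{\central-\critical}$ is the familiar shift relating the natural point of $p$-adic interpolation (the rightmost critical point $\critical$) to the central critical point $\central$.

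The decisive step is to prove $\mu_{\pi,\chi}\neq 0$ in $\Lambda_{\cO}(\Gamma)$, and in fact that its cyclotomic pushforward
\[
\mu_{\pi,\chi}^+ \in \Lambda_{\cO}(\Gamma^+),
\]
obtained by setting $\psi'=1$, is already nonzero as a one-variable measure. Although archimedean nonvanishing at $\central$ itself is precisely what we wish to establish, for a tempered cuspidal $\pi$ the classical $L$-function $L(s,\pi,\chi)$ is known to be nonzero at every critical point strictly to the right of $\central$. Since $\mu_{\pi,\chi}^+$ may be paired against any continuous (not necessarily finite order) character of $\Gamma^+$, I would pair it against the infinite-order cyclotomic character that corresponds, under the interpolation formula, to shifting the argument of the complex $L$-function from $\central$ to $\critical$. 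The output is (a nonzero scalar multiple of) $L(\critical,\pi,\chi)$, which is nonzero, forcing $\mu_{\pi,\chi}^+\neq 0$ and hence $\mu_{\pi,\chi}\neq 0$.

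With nonvanishing in hand, the two assertions follow from two applications of Weierstrass preparation. For the first assertion, $\mu_{\pi,\chi}^+$, viewed in $\cO[[T^+]]$ after fixing an isomorphism $\Gamma^+\cong \ZZ_p$, is a nonzero power series and hence vanishes at only finitely many finite order characters $\psi$ of $\Gamma^+$; the interpolation formula translates this into the desired nonvanishing of $L(\central,\pi,\chi\omega^{\central-\critical}\psi)$. For the second assertion, I would view $\mu_{\pi,\chi}\in \cO[[T^-]][[T^+]]$ as a power series in $T^+$ with coefficients in the ring $\cO[[T^-]]$; Weierstrass preparation in $T^+$ forces all but finitely many specializations to finite order $\psi$ to yield a nonzero one-variable measure $\mu_{\pi,\chi}|_\psi\in \Lambda_{\cO}(\Gamma^-)$, and a second application of Weierstrass preparation in $T^-$ then bounds the anti-cyclotomic zeros of each such specialization. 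The main obstacle, and the technical heart of the approach, is the combined first two steps: producing a $p$-adic $L$-function with the stated interpolation formula under the mildest reasonable hypothesis on $p$ (the role of the Appendix), and then exhibiting an interpolation point at which a classically known nonzero archimedean value forces the $p$-adic $L$-function itself to be nonzero.
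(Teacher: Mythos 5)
Your proposal follows essentially the same route as the paper: invoke the $p$-adic $L$-function from \cite{HELS} (as extended by the Appendix), pair it against the point corresponding to a critical $\critical > \central$ where the archimedean value $L(\critical,\pi,\chi)$ is nonzero and Condition \ref{cond-ord} rules out an exceptional zero, conclude the relevant one-variable specialization of the measure is nonzero, and then apply Weierstrass preparation (twice) to pass from ``nonzero'' to ``all but finitely many twists are nonzero.'' The one packaging difference is at the nonvanishing step: the paper's Lemma \ref{existence-lemma2} establishes that infinitely many finite order twists are nonzero by $p$-adically approximating the character $\normcharacter^{\central-\critical}$ with $\cO$-linear combinations of finite order characters and using continuity of the measure, whereas you directly evaluate the measure at the continuous character. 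The paper explicitly notes your route as the obvious alternative (``we could instead have proved Lemma \ref{existence-lemma2} simply by applying the Weierstrass Preparation Theorem''), but prefers the approximation argument because it also yields Corollary \ref{lincombo-cor} about nonvanishing of finite linear combinations $\sum_\psi a_\psi L(\central,\pi,\chi\psi)$, which is closer in spirit to archimedean averaging results. One small caveat: the ``nonzero local factors'' claim in your interpolation formula is precisely what the exceptional-zero part of Conditions \ref{cond-pLfcnexists} and \ref{cond-ord} is designed to control, so that point should be tied explicitly to those hypotheses rather than asserted.
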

For simplicity, we have assumed in Theorem \ref{ord-Thm-intro} that $\chi$ has conductor dividing $p^\infty$.  For more general conductor, there is an additional finite order character (given precisely in Theorem \ref{ord-Thm}) by which we must twist the $L$-function in Equation \eqref{extra-term-intro}.

The condition at the beginning of Theorem \ref{ord-Thm-intro} is satisfied, for example, if $\chi$ is of type $A_0$ and $\pi$ is {\it ordinary} at $p$ (as a consequence of the main results of \cite{HELS}).  All tempered cuspidal automorphic representations ordinary at $p$ satisfy Condition \ref{cond-ord}, by the main results of \cite{HELS}.  Furthermore, this can be extended (via the Appendix in Section \ref{app-section}, employing \cite{brasca-rosso, EiMa, liuJussieu, liu-rosso}) to certain {\it $P$-ordinary} representations, For generalizations to $L$-functions attached to other data, we have Theorem \ref{cond-Thm-intro}, which relies on a weaker condition.
Note that for tempered cuspidal automorphic representations $\pi$ on unitary groups, for each $\chi$ of type $A_0$, $L(s, \pi, \chi)\neq 0$ for $s$ to the right of the central critical point for $(\pi, \chi)$.  Therefore, with sufficient information about the exceptional zeroes (equivalently, the form of the modified Euler factors) of an associated $p$-adic $L$-function, we could obtain the stronger theorem.  (In the statement, $\Delta\times\Delta_\fixedconductor$ is a certain finite group and $\normcharacter$ is the norm character.)

\begin{thm}[Theorem \ref{cond-Thm}]\label{cond-Thm-intro}
For each $(\pi, \fixedchar)$ meeting Condition \ref{cond-pLfcnexists} and type $A_0$ Hecke character $\chi$ of $\adeles_K^\times$ such that $\central$ is central for $(\pi, \chi)$ and such that $\chi\normcharacter^{-\central}_{\Delta\times\Delta_\fixedconductor}=\fixedchar$, there is a finite order Hecke character $\chi'$ of $\Gamma$ such that such that at the central critical point $\central$ for $(\pi, \chi)$, 
\begin{align}\label{cond-Thm-equ-intro}
L(\central, \pi, \chi\chi')\neq 0
\end{align}
and
\begin{align*}
L(\central, \pi, \chi\chi'\psi)\neq 0
\end{align*}
for all but finitely many unitary cyclotomic (resp.\ anti-cyclotomic) characters $\psi$ of $\Gamma^+$ (resp.\ $\Gamma^-$).  Furthermore, for all but finitely many such $\psi$, $L(\central, \pi, \chi\chi'\psi\psi')\neq 0$ for all but finitely many unitary anti-cyclotomic (resp.\ cyclotomic) characters $\psi'$ of $\Gamma^-$ (resp. $\Gamma^+$).
\end{thm}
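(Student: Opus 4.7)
The plan is to follow the strategy of Greenberg \cite{gr1}: package the collection of central critical values $\{L(\central, \pi, \chi\chi')\}_{\chi'}$ into a single $p$-adic $L$-function $\mu_{\pi,\fixedchar}$ belonging to the Iwasawa algebra $\Lambda = \cO[[\Gamma]]$ (with $\cO$ the ring of integers in a sufficiently large finite extension of $\ZZ_p$), show that $\mu_{\pi,\fixedchar}$ is nonzero as an element of $\Lambda$, and then use the algebraic structure of $\Lambda$ to rule out vanishing of the specializations except on a codimension-one locus.

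First I would invoke Condition \ref{cond-pLfcnexists}, which is precisely the hypothesis providing $\mu_{\pi,\fixedchar}\in\Lambda$ together with the interpolation formula relating $\mu_{\pi,\fixedchar}(\chi')$ to $L(\central, \pi, \chi\chi')$ (up to a period, modified Euler factors at places dividing $p\fixedconductor\infty$, and a finite product of auxiliary factors). The condition should likewise guarantee that $\mu_{\pi,\fixedchar}$ is nonzero in $\Lambda$. Once this is in hand, the proof is essentially algebraic. Since $\Gamma\cong\ZZ_p^2$, we have $\Lambda\cong\cO[[T_1,T_2]]$, a regular local domain of Krull dimension three in which the vanishing locus of a nonzero element is a proper closed subscheme, i.e., a finite union of height-one primes. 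The set of finite-order characters $\chi'$ of $\Gamma$ at which $\mu_{\pi,\fixedchar}$ vanishes is therefore Zariski meagre, so we can pick $\chi'$ avoiding both this locus and the (finite) set of characters at which the modified Euler factors degenerate. For this $\chi'$, the interpolation formula gives $L(\central,\pi,\chi\chi')\neq 0$.

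To obtain the remaining statements, replace $\mu_{\pi,\fixedchar}$ by its twist by $\chi'$ (still nonzero in $\Lambda$) and decompose $\Lambda\cong \Lambda^+\widehat{\otimes}_\cO\Lambda^-$, where $\Lambda^\pm=\cO[[\Gamma^\pm]]$. The restrictions of the twisted $\mu$ to $\Lambda^+$ and $\Lambda^-$ are each nonzero one-variable power series; the Weierstrass preparation theorem then shows that each has only finitely many zeros among finite-order characters, producing the first layer of nonvanishing along $\Gamma^+$ and along $\Gamma^-$. For the iterated statement, the same codimension argument applied to the two-variable $\mu\in\Lambda^+\widehat{\otimes}\Lambda^-$ shows that for all but finitely many $\psi$ on $\Gamma^+$ (resp.\ $\Gamma^-$), the specialization $\mu|_{\psi}\in\Lambda^-$ (resp.\ $\Lambda^+$) is nonzero, and a final application of Weierstrass preparation gives the claim.

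The principal obstacle is the nonvanishing of the $p$-adic $L$-function $\mu_{\pi,\fixedchar}$ itself. In contrast to Theorem \ref{ord-Thm-intro}, where the $\omega^{\central-\critical}$-twist is engineered so that interpolation at characters coming from noncentral critical points $\critical$ to the right of $\central$ captures $L$-values which are automatically nonzero for tempered $\pi$ by absolute convergence of the Euler product, Theorem \ref{cond-Thm} subsumes this difficulty into Condition \ref{cond-pLfcnexists}. In any particular application one must therefore verify the condition by constructing (or invoking) a $p$-adic $L$-function whose nonvanishing can be certified, typically by exhibiting a single specialization where both the modified Euler factors and the classical $L$-value are known to be nonzero.
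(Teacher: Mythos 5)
Your proposal is correct and follows essentially the same strategy as the paper: invoke Condition \ref{cond-pLfcnexists} to produce a nonzero element $\mathscr{L}_{\pi,\fixedchar}\in\Lambda$, extract a finite-order $\chi'$ at which it does not vanish, and then use Weierstrass preparation along $\Gamma^+$ and $\Gamma^-$ (which is precisely the content of Proposition \ref{all-or-none}) for the ``all but finitely many'' conclusions. The one place where your argument diverges slightly is the extraction of the initial finite-order $\chi'$: you argue via codimension of the zero locus of $\mathscr{L}_{\pi,\fixedchar}$ in $\mathrm{Spec}\,\Lambda$, whereas the paper approximates the character $\chi''$ furnished by Condition \ref{cond-pLfcnexists} by $\cO$-linear combinations of finite-order characters and uses continuity of the measure (as in Lemma \ref{existence-lemma2}). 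Your phrasing ``Zariski meagre, so we can pick $\chi'$ avoiding this locus'' silently uses the density of the torsion points $(\zeta_1-1,\zeta_2-1)$ in $\mathrm{Spec}\,\cO[[T_1,T_2]]$ --- a proper closed subscheme can still contain infinitely many finite-order characters, so one must argue (e.g.\ by two applications of Weierstrass preparation, or by the UFD structure) that it cannot contain all of them. This is a standard fact, but it is the crux of that step; the paper's approximation argument makes it explicit and in addition yields the refinement recorded as Corollary \ref{lincombo-cor}, that suitable $\cO$-linear combinations of twisted $L$-values near $\chi''$ are nonzero, which is closer in spirit to what archimedean analytic nonvanishing proofs provide. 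Otherwise the two proofs agree, including the UFD/Weierstrass bookkeeping for the iterated nonvanishing over $\Gamma^+$ and $\Gamma^-$; you should just make sure, as the paper's Condition \ref{cond-pLfcnexists} does, to address the interpolation factors $c_p(\chi)$ so that nonvanishing of $\chi_\Gamma(\mathscr{L}_{\pi,\fixedchar})$ genuinely implies nonvanishing of $L(\central,\pi,\chi\chi')$.
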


\begin{rmk}
Assuming the Langlands conjectures, our nonvanishing results immediately imply new nonvanishing results for $\GL_n$.  More precisely, the Langlands conjectures (see, for example, \cite{arthur, cogdell}) predict that associated (via Langlands functoriality) to the cuspidal automorphic representation $\pi$ of a unitary group of rank $n$, there is an automorphic representation of $\GL_n$ whose $L$-function agrees with the $L$-function associated to $\pi$.  Langlands functoriality also respects tensor products, so our results on twisted $L$-functions also carry over.  If the cuspidal automorphic representation $\pi$ is {\it ordinary} at $p$, then it meets Condition \ref{cond-ord}.  As explained in \cite[Section 8]{H98}, ordinariness (what Hida calls {\it near-ordinariness}) at $p$ is preserved under functoriality.  (Ordinariness is determined by the action of certain Hecke operators that are unnecessary in the present paper.)  A more general class of representations of unitary groups that includes those in the Appendix Section \ref{app-section} ({\it $P$-ordinary} representations, which also satisfy Condition \ref{cond-ord}) has an analogous property preserved under functoriality, by  \cite[Section 8]{H98}.
\end{rmk}

\subsection{Approach}
As indicated above, this paper illustrates how decades-old methods in Iwasawa theory can be combined with the output of new machinery to prove results about functions arising in analytic number but for which no known proof via (archimedean) analytic number theoretic methods currently exists.  

\subsubsection{Existence of $p$-adic $L$-functions}
Readers unfamiliar with $p$-adic measures and Iwasawa algebras are encouraged to consult \cite[\S 12.1-12.2 and \S7.1-7.2]{washington} alongside this paper.
Most of the paper relies on the existence of a $p$-adic $L$-function (realized as a $p$-adic measure and as an element of an Iwasawa algebra over a $p$-adic ring) whose values $p$-adically interpolate (an appropriate normalization of) $L(\critical, \pi, \chi)$ at critical points $\critical$ and Hecke characters $\chi$.  

The machinery in \cite{HELS} allows the construction of $p$-adic $L$-functions for $\pi$ {\it ordinary} at $p$, and this is extended further in Section \ref{app-section} (Appendix) of the present paper, by building on very recent developments (\cite{brasca-rosso, EiMa, liuJussieu, liu-rosso}).  To prove some of the results of the present paper concerning cyclotomic characters, it is crucial for the $p$-adic $L$-function to interpolate among different critical values.  For this reason, the earlier construction of associated $p$-adic $L$-functions proposed in \cite{HLS} is insufficient for completing the work in the present paper.  Instead, one must work with more general $p$-adic $L$-functions, which employ the Eisenstein measure in \cite{apptoSHL} and differential operators from \cite{EDiffOps} that enable interpolation among all the critical values.

Coates and Perrin-Riou have conjectured the existence of a wide class of $p$-adic $L$-functions, attached to ordinary motives \cite{coates, CoPR}.  Assuming their conjectures, one can extend most of the work in this paper to their setting, as we use almost nothing specific to unitary groups in the purely $p$-adic portions this paper, except for the crucial fact that we have already constructed the requisite $p$-adic $L$-functions in that case.  In fact, we have stated most of the propositions and lemmas so that they can easily be modified to accommodate other cases where the requisite $p$-adic $L$-functions exist.

While the construction in \cite{HELS} (and \cite{apptoSHL, EDiffOps}) relies on the geometry of Shimura varieties (extending \cite{kaCM} to unitary groups), the method of construction of the $p$-adic $L$-functions is irrelevant to the nonvanishing results in this paper.  What matters is their existence as $p$-adic measures and, equivalently, elements of Iwasawa algebras identified with power series rings. 

\subsubsection{Structure of $p$-adic power series rings and properties of $p$-adic measures}  
Building on an idea of Greenberg in \cite{gr1}, we exploit the structure of the Iwasawa algebra (as a power series ring) in which the $p$-adic $L$-function is realized.  In particular, we use the Weierstrass Preparation Theorem (\cite[Chapter VII Section 4]{bourbaki1998commutative}) in the proof of Proposition \ref{all-or-none}, which states, roughly, for each critical value that either all the cyclotomic (resp.\ anti-cyclotomic) twists of the $L$-function vanish or only finitely many do.  (The proof actually shows this for the $p$-adic $L$-function, and hence, one needs to know something about the relationship between the $L$-function and the associated $p$-adic $L$-function.)  One could also apply P.\ Monsky's results on the structure of $p$-adic power series rings, for example \cite[Theorem 2.6]{monsky}, in our context to obtain a more refined description of the set of zeroes of these $p$-adic $L$-functions.  

Given Proposition \ref{all-or-none}, it is sufficient to prove that at least one value of the $p$-adic $L$-function does not vanish.  For this, we actually show the nonvanishing of infinitely many values.  While our approach to this task necessarily invokes entirely different methodology that used by from Greenberg in his setting, we both rely on relating limits (archimedean limits in his case, $p$-adic in ours) of linear combinations of the $L$-values in question to a different $L$-value known to be nonzero.  There is no clear way to extend Greenberg's approach, an intricate real-analytic argument involving Abel means, to our situation.  Likewise, there is no clear way to extend our approach, a $p$-adic argument requiring the existence of a nonvanishing critical value to the right of the central critical point, to his situation. 

More precisely, using the fact that 
\begin{align}\label{norm-shift}
L(\critical, \pi, \chi) = L(\central, \pi, \chi\normcharacter^{-\critical+\central}),
\end{align}
(where $\normcharacter$ denotes the norm character), $p$-adically approximating $\normcharacter$ by linear combinations of finite order cyclotomic characters, and using the formulation of the $p$-adic $L$-function as a $p$-adic measure, we achieve Lemma \ref{existence-lemma}, which states (roughly) that infinitely many twists by cyclotomic characters do not vanish, so long as {\it some} critical (not necessarily central) value is known to vanish.  For this, we need a value of $L(s, \pi, \chi)$ at a critical point to the right of the central critical point not to vanish and not to correspond to an exceptional zero of the associated $p$-adic $L$-function.  For this, it is crucial that we know something about the $L$-functions with which we work, and merely taking an $L$-function whose values are interpolated by a $p$-adic $L$-function (without knowing something about its zeroes and exceptional zeroes of the associated $p$-adic $L$-function) will not, by itself, suffice.

 \subsection{Related directions}\label{related-directions}
 The inspiration for the approach in this paper comes from R.\ Greenberg's work in \cite{gr1, gr2}.  For a different class of $L$-functions for which the construction of $p$-adic $L$-functions follows from different techniques, F.\ Januszewski also recently independently obtained related nonvanishing results \cite{janu}.  
  As the theorems (but neither the propositions nor the lemmas) in this paper require nonvanishing at a critical point to the right of the central critical point, our theorems do not naturally extend to modular forms of weight $2$ (elliptic curves).  As noted above, though, they may extend to certain $L$-functions associated to other data.  More immediately, one can use \cite{ch, ch2, chenevier-harris, harris-takagi} to associate Galois representations $\rho_\pi$  (including in $p$-adic families) to $\pi$ and reframe the results from this paper in terms of Galois representations.  We have restricted to CM fields of degree $2$ in this paper, but one can extend the results to CM fields of arbitrary finite degree (at the cost of extra notation).  We allow more general conductor than Greenberg, but this does not cost us anything except some extra notation.
 
 \subsubsection{Additional approaches to closely related nonvanishing results}\label{additional-approaches}
Among the other related nonvanishing results in the literature, the closest appear to be D.\ Rohrlich's results for $\GL_2$ proved in \cite{rohrlich1, rohrlich2, rohrlich3, rohrlich4} and the strategy introduced by F.\ Shahidi for Rankin--Selberg products in \cite{shahidi}.  As Rohrlich notes in the introduction to \cite{rohrlich1}, it is not clear how to extend his approach employed in his deep arguments to higher rank groups.  Shahidi's idea is to use the Langlands--Shahidi method (also exploited in \cite{GJR}) to realize the reciprocals of certain $L$-functions in the constant terms of holomorphic Eisenstein series and conclude the nonvanishing of the $L$-function from the lack of poles of a holomorphic function.  While Shahidi's approach might be adaptable to our setting, we have not attempted that route.  In yet another direction, using the theory of endoscopy, D. Jiang and L. Zhang have recently proved related nonvanishing results under certain conditions in \cite{jiang-zhang}.

\subsubsection{The algebraic meaning of nonvanishing}
The reader who compares \cite[Propositions 3 and 4]{gr1} with Proposition \ref{all-or-none} will note that we consider more general classes of characters but do not address the {\it critical divisor} that appears in Greenberg's work.  The critical divisor enables Greenberg to relate the vanishing of $L$-functions (at Gr\"ossencharacters associated to CM elliptic curves) to the structure of CM elliptic curves.  For the Hecke characters corresponding to those in his work (see Remark \ref{comparison-rmk} for the precise connection between his statements and ours), his claims about divisibility by the critical divisor carry over immediately.  A step toward interpreting the algebraic meaning of our more general vanishing statements would be to extend the realization of critical divisors in our setting.

\subsection{Acknowledgements}
I am grateful to Michael Harris for insightful discussions and suggestions, as well as encouragement to write this paper.  I also thank Jianshu Li and Aaron Pollack for helpful tips about local Godement--Jacquet style integrals, Rob Harron for a helpful discussion about exceptional zeroes, and David Rohrlich for a helpful suggestion to consider implications (via Langlands functoriality) for $L$-functions associated to representations of $\GL_n$.

\section{Background and Notation}\label{background-section}
Fix an algebraic closure $\bar{\IQ}$ of $\IQ$.  Let $K\subset \bar{\IQ}$ be a quadratic imaginary extension of $\IQ$, let $\cO_K$ denote the ring of integers in $K$, and let $p$ be an odd rational prime that splits in $K$.  Write $(p)=\fp\fp^*$ for the prime factorization of $(p)$ in $\cO_K$. 
 Let $c$ denote complex conjugation.  Given an element $a\in\IC$, we set $\bar{a}:=c(a)$.

Denote by $\IC_p$ the completion of an algebraic closure of $\IQ_p$, and let $\cO$ denote the ring of integers in $\IC_p$.  Fix an embedding $\sigma_\fp: \bar{\IQ}\hookrightarrow\IC_p$ inducing the $\fp$-adic valuation on $K$, and fix an embedding $\sigma_\infty:\bar{\IQ}\hookrightarrow\IC$.  Via $\sigma_\fp$ (resp.\ $\sigma_\infty$), we identify $\bar{\IQ}$ with its image in $\IC_p$ (resp.\ $\IC$).

\subsection{Important extensions and Galois groups}
Let $K_\infty^-$ denote the anti-cyclotomic $\ZZ_p$-extension of $K$, so $K_\infty^-=\cup_{n\geq 0}K_n^-$ with $K_n^-$ cyclic of degree $p^n$ over $K$ and $\Gal(K_n^-/\IQ)$ dihedral for all $n$.  For each element $g\in\Gal(K_n^-/K)$, we have $c\circ g \circ c^{-1} = g^{-1}$.
Let $K_\infty^+$ denote the cyclotomic $\ZZ_p$-extension of $K$, so $K_\infty^+=\cup_{n\geq 0}K_n^+$ with $K_n^+$ cyclic of degree $p^n$ contained in the $p^{n+1}$th cyclotomic extension $K(\mu_{p^{n+1}})$, where $\mu_{p^{n+1}}$ denotes the group of $p^{n+1}$-power roots of unity, and $K_\infty^+\subseteq K(\mu_p^\infty)$, with $\mu_p^\infty$ the group of all $p$-power roots of unity.  Let $K_\infty = K_\infty^+K_\infty^-$.  So $K_\infty$ is the compositum of all the $\ZZ_p$-extensions of $K$.  Define $\Gamma := \Gal\left(K_\infty/K\right)$, $\Gamma^-=\Gal(K_\infty^-/K)$, and $\Gamma^+=\Gal(K_\infty^+/K)$.  So $\Gamma^-\cong\ZZ_p$, $\Gamma^+\cong\ZZ_p$, and $\Gamma\cong\Gamma^+\times\Gamma^-\cong\ZZ_p^2$.  Note that for each Galois extension $L/K$, $c$ acts on $\Gal(L/K)$ by conjugation.  We have that $c$ acts (via conjugation) on $\Gamma^+$ as the identity and on $\Gamma^-$ by inverting each element.
Fix a topological generator $\gamma_+$ for $\Gamma^+$ and a topological generator $\gamma_-$ for $\Gamma^-$.  

For any modulus $\mathfrak{n}$, let $K(\mathfrak{n})$ be the maximal abelian extension of $K$ with conductor dividing $\mathfrak{n}$, and let $K(\mathfrak{n}^\infty\mathfrak{n}'):=\cup_mK(\mathfrak{n}^m\mathfrak{n}')$ for any coprime pair of moduli $\mathfrak{n}$ and $\mathfrak{n}'$.  Fix a modulus $\fixedconductor$ prime to $p$.  So $K_\infty\subseteq K(p^\infty)\subseteq K(p^\infty\fixedconductor)$.  Let $G = \Gal(K(p^\infty)/K) = \varprojlim_n\Gal(K(p^n)/K)$, and let $G_{\fixedconductor} = \Gal(K(p^\infty\fixedconductor)/K) = \varprojlim_n\Gal(K(p^n\fixedconductor)/K)$.    We have $G\cong\ZZ_p^\times\times\ZZ_p^\times\cong\Delta\times\Gamma,$ with $\Delta\cong\left(\ZZ/p\ZZ\right)^\times\times\left(\ZZ/p\ZZ\right)^\times$ identified with $\Gal(K(p^\infty)/K_\infty)$.  Furthermore, we have $G_\fixedconductor\cong\Delta_\fixedconductor\times\ZZ_p^\times\times\ZZ_p^\times\cong\Delta_\fixedconductor\times\Delta\times\Gamma,$ with $\Delta_\fixedconductor$ a finite group and $\Delta_\fixedconductor\times\Delta$ identified with $\Gal(K(p^\infty\fixedconductor)/K_\infty)$.  

\subsection{Characters}
We recall some key facts about Hecke characters.  Given the embedding $\sigma_\fp$, we have that for any Hecke character $\chi$ on $K$ of type $A_0$ (as in \cite{weil}), there is an associated continuous $\IC_p$-valued character (its $p$-adic avatar, obtained by shifting the component at $\infty$ to $p$), which we will also denote by $\chi$, on $G_K:=\Gal(\bar{K}/K)$.  For any Hecke character on $K$, we define $\chi^*=\chi\circ c$.  A type $A_0$ Hecke character on the quadratic imaginary field $K$ can be written in the form $\chi((a_v)_v)=\chi_f((a_v)_v)\cdot(a_{\sigma_\infty})^a(\bar{a}_{\sigma_{c\circ\sigma_\infty}})^b$ on id\`eles $(a_v)_v\in\adeles^\times_K$, with $a$ and $b$ integers and $\chi_f$ a finite order character.  The pair $(a, b)$ is the {\it infinity type} of $\chi$.  We denote by $\normcharacter$ the norm Hecke character.  The infinity type of $\normcharacter$ is $(1, 1)$.  Viewed as $\IC_p$-valued character on the Galois group, $\mathcal{N}$ factors through $\Gal((K(\mu_{p^\infty})K(\fixedconductor))/K)$, is trivial on the Galois group of the anti-cyclotomic extension, and gives the isomorphism $\Gal(K(\mu_{p^\infty})/K)\rightarrow \ZZ_p^\times$ describing the Galois-action of $\Gal(K(\mu_{p^\infty})/K)$ on $\mu_{p^\infty}$, mapping $\gamma_+$ onto a nontrivial principal unit $u\in 1+\ZZ_p\subseteq\ZZ_p$.  Note that $\Delta$ decomposes as $\Delta_+\times\Delta_-$, with $\mathcal{N}$ giving an isomorphism of $\Delta_+\subseteq\Gal(K(\mu_{p^\infty})/K)$ onto $(\ZZ/p\ZZ)^\times$ and $\mathcal{N}$ trivial on $\Delta_-$.

A continuous character $\chi$ on $G_K$ is {\it anti-cyclotomic} if $\chi(c\circ g\circ c) = \chi(g)^{-1}$ for all $g\in G_K$.  So anti-cyclotomic characters on $G_K$ are the ones that factor through anti-cyclotomic extensions.  Equivalently, a Hecke character $\chi$ is anti-cyclotomic if $\chi^* = \chi^{-1}$, so its infinity type is $(a, -a)$ for some integer $a$.  Note that for any Hecke character $\chi$, $\chi/\chi^*$ is an anti-cyclotomic character.  Also, any anti-cyclotomic character of conductor dividing $p^\infty\fixedconductor$ is trivial on $\Gal(K(\mu_{p^\infty})/K)$ and factors through the Galois group of the maximal anti-cyclotomic extension inside $\Gal(K(p^\infty\fixedconductor)/K)$.  If $\chi$ is anti-cyclotomic with infinity type not $(0, 0)$, then $\chi(\gamma_-)$ is a nontrivial element of $1+p\ZZ_p$. 

For any character $\chi$ on $G$, we write $\chi = \chi_\subGamma\chi_\subDelta$, and for any character $\chi$ on $G_{\fixedconductor}$, we write $\chi = \chi_\subGamma\chi_\subDelta\chi_{_{\Delta_{\fixedconductor}}}$, with $\chi_{_H}$ denoting the restriction of $\chi$ to $H$ for any subgroup $H$.  Note that $\normcharacter_{\subDelta_+}=\omega$, where $\omega$ denotes the Teichm\"uller character defined by $\omega(a)\equiv a\mod p$ for all $a$ in $(\ZZ/p\ZZ)^\times$.

\subsection{Iwasawa algebras}
For any pro-$p$ group $H = \varprojlim_n H_n$ with $H_n$ a finite $p$-group, define $\Lambda_H:=\ZZ_p[\![H]\!]:=\varprojlim_n\ZZ_p[H_n]$. Then we identify $\Lambda_\Gamma$ with the power series $\ZZ_p[\![T_+, T_-]\!]$ in two variables $T_+, T_-$ via
\begin{align}\label{ps-id}
\ZZ_p[\![T_+, T_-]\!]&\isomto\Lambda_\Gamma\\
T_\pm&\mapsto\gamma_\pm-\id_{\subGamma}\nonumber
\end{align}
We have $\Lambda_G = \Lambda_\Gamma[\Delta]$ and $\Lambda_{G_{\fixedconductor}} =\Lambda_\Gamma[\Delta\times\Delta_{\fixedconductor}]$.  We define $\Lambda:=\Lambda_\Gamma$.  As is conventional, given a $\ZZ_p$-algebra $R$ and a character $\chi$ on $H$, we extend $\chi$ $R$-linearly to a function on $\Lambda_H\otimes_{\ZZ_p} R$.

\section{Results on Nonvanishing}\label{NV-section}
By Equation \eqref{norm-shift}, it suffices to consider values of the $L$-function at $s=0$.  Correspondingly, $(\critical, \chi)$ is a zero of the $L$-function $L(\cdot, \pi, \cdot)$ if and only if $\chi\normcharacter^{-\critical}$ is a zero of the function $L(0, \pi, \cdot)$.  We will use these two equivalent perspectives interchangeably, both for the ($\IC$-valued) $L$-function and the $p$-adic $L$-functions.
In this section:
\begin{itemize}
\item{$\chi$ denotes a type $A_0$ Hecke character of the id\`eles $\adeles_K^\times$ of the CM field $K$}
\item{$\pi$ denotes a cuspidal automorphic representation (which is tempered) of a general unitary group preserving a Hermitian form on a vector space over $K$}
\item{$\fixedchar$ denotes a character on the finite group $\Delta\times\Delta_{\fixedconductor}$}
\end{itemize}
We will be interested in pairs $(\pi, \fixedchar)$ meeting Condition \ref{cond-pLfcnexists}:
\begin{cond}\label{cond-pLfcnexists}
There is a $p$-adic $L$-function interpolating values of $L(\measurecritical, \pi, \chi)$
as $\chi$ varies $p$-adically over Hecke characters $\chi$ such that $\chi_\subDelta\chi_{_{\Delta_{\fixedconductor}}}=\fixedchar$ with $\measurecritical$ critical for $(\pi, \chi)$.  More precisely, we require that there is an element $\mathscr{L}_{\pi, \fixedchar}\in\Lambda\otimes_{\ZZ_p}\cO$ such that for all type $A_0$ Hecke characters $\chi$ on $G_{\mathfrak{m}}$ with $\chi_\subDelta\chi_{_{\Delta_{\fixedconductor}}}=\fixedchar$ such that $\measurecritical$ is critical for $(\pi, \chi)$
\begin{align}\label{Lfcn-comparison}
\sigma_\fp^{-1}\left(c_p\left(\chi\right)\chi_{\subGamma}\left(\mathscr{L}_{\pi, \fixedchar}\right)\right) = \sigma_\infty^{-1}\left(c_\infty\left(\chi\right)L\left(\measurecritical, \pi, \chi\right)\right),
\end{align}
 with $c_\infty(\chi)$ (resp.\ $c_p(\chi)$) a product of an archimedean (resp.\ $p$-adic) period and modified Euler factors (as predicted in \cite{deligne, coates, CoPR} and shown in \cite{HELS} when $\pi$ is ordinary at $p$).  
  We also require that there exists $\chi$ with $\chi_{\Delta\times\Delta_\fixedconductor}=\fixedchar$ such that $0$ is critical for $(\pi, \chi)$, $L(0, \pi, \chi)\neq 0$, and $\chi$ is not a(n exceptional) zero of the $p$-adic $L$-function, and also that for each character $\psi$ such that the conductor of the $p$-adic avatar of the unitary part of $\psi$ is divisible by $p$, the $p$-adic $L$-function does not vanish at $\psi$ if $L(\measurecritical, \pi, \psi)\neq 0$ and $\measurecritical$ is critical for $(\pi, \psi)$.  
 (This is a very weak requirement, since for all $(\pi, \chi)$, with $\pi$ tempered, we have that $L(s, \pi, \chi)$ is nonzero for $s>\central+\frac{1}{2}$, with $\central$ the central point.  Also, we do not expect an exceptional zero at a critical point if $p$ divides the conductor of $\chi$.)  In particular, in this paper, we work under the following two assumptions:
\begin{itemize}
\item{$L(s, \pi, \chi)\neq 0$ for some character $\chi_{\Delta\times\Delta_\fixedconductor}=\fixedchar$ and some $s>s_0+\frac{1}{2}$.}
\item{$\pi$ is $p$-ordinary.}
\end{itemize}
 
When $(\pi, \fixedchar)$ meets Condition \ref{cond-pLfcnexists} for each of the (finitely many) characters $\fixedchar$ on $\Delta\times\Delta_{\fixedconductor}$, we say {\it $\pi$ meets Condition \ref{cond-pLfcnexists}}.  
\end{cond}

As noted in Section \ref{related-directions}, since the theorems (although not the propositions and lemmas) in this paper require nonvanishing at a critical point to the right of the central critical point, our theorems do not naturally extend to modular forms of weight $2$ (elliptic curves), but they may extend to certain $L$-functions associated to other data.  As mentioned in Section \ref{additional-approaches}, the case of $\GL_2$ is, however, handled via analytic methods by Rohrlich (including in \cite{rohrlich1}, which notes it is not clear how to extend those arguments for $\GL_2$ to higher rank groups).

Given the dictionary between $p$-adic measures on $\Gamma$ and elements of the Iwasawa algebra over $\Gamma$ (see e.g.\ \cite[\S 12.1-12.2]{washington}), Condition \ref{cond-pLfcnexists} is equivalent to the requirement that there is a $p$-adic measure $\mu_{\pi, \fixedchar}$ on $\Gamma$ whose values (after appropriate normalization) at $\chi$, with $\measurecritical$ critical for $(\pi, \chi)$, agree with the values of the (appropriately normalized) $L$-function $L(0, \pi, \chi)$, as in Equation \eqref{Lfcn-comparison}.

We say that $\pi$ meets Condition \ref{cond-ord} if:
\begin{cond}\label{cond-ord}
$\pi$ meets Condition \ref{cond-pLfcnexists} and the $p$-adic $L$-function has no exceptional zeroes at $(\critical, \chi)$ for points $\critical$ that are critical for $(\pi, \chi)$ and lie to the right of the central critical point for $(\pi, \chi)$.
\end{cond}

\begin{example}
Cuspidal automorphic representations that are ordinary at $p$ satisfy Condition \ref{cond-ord} by the main results of \cite{HELS}.  So we have an infinite set of representations that satisfies Condition \ref{cond-ord}.   
(The nonvanishing of the $\IC$-valued $L$-function at points $s>\central+\frac{1}{2}$ follows from the fact that we work with tempered representations.  The modified Euler factors away from $p$ for the $p$-adic $L$-function are constant volume factors, and at $p$, they are given explicitly in terms of Godement--Jacquet style zeta functions at $p$, again for tempered representations.  So for $s>\central+\frac{1}{2}$, the $p$-adic $L$-function also does not vanish.)  Note, though, that for the main theorem of the present paper, it suffices, in fact, to establish nonvanishing for one representative from each of the (finitely many) congruences classes $\bmod \left(\mathrm{lcm} (p, |\Delta_\mathfrak{m}|)\right)$, rather than for the whole set of critical points to the right of $\central+\frac{1}{2}$.

More generally, we expect a certain larger class of (tempered) cuspidal automorphic representations $\pi$ to satisfy Condition \ref{cond-ord}.  See, e.g., the Appendix.\end{example}

\begin{rmk}
We make a few observations about factorizations of Hecke characters.  Let $\chi$ be a Hecke character of infinity type $(a, b)$, and let $\anticycchar$ be an anticyclotomic character of infinity type $(1, -1)$ and conductor dividing $p^\infty$.  Then there exist Hecke characters $\chi_1$, $\chi_2$, $\chi_3$, and $\chi_4$ such that
$\chi = \chi_1\normcharacter^a$ with $\chi_1$ of infinity type $(0, b-a)$, $\chi=\chi_2\normcharacter^b$ with $\chi_2$ of infinity type $(a-b, 0)$, $\chi = \chi_3\anticycchar^a$ with $\chi_3$ of infinity type $(0,b+a)$, and $\chi = \chi_4\anticycchar^{-b}$ with $\chi_4$ of infinity type $(a+b,0)$.  More generally, any two Hecke characters of the same infinity type differ by a finite order Hecke character.  Also, if $\chi$ is of infinity type $(a, b)$, then $\chi\bar{\chi}=\normcharacter^{a+b}$.  Furthermore, if $\chi$ is of infinity type $(a, b)$ with $a\equiv b\mod 2$, we can write $\chi = \chi_f\anticycchar^{\frac{a-b}{2}}\normcharacter^{\frac{a+b}{2}}$, with $\chi_f$ a finite order character on $\Delta\times\Delta_\fixedconductor$.
\end{rmk}

Let $m$ be the exponent of the finite group $\Delta\times\Delta_\fixedconductor$.

\begin{prop}\label{all-or-none}
Suppose $(\pi, \fixedchar)$ meets condition \ref{cond-pLfcnexists}, and let $\chi$ be a type $A_0$ Hecke character of infinity type $(a, b)$ on $\Gal(K(p^\infty\fixedconductor)/K)$.
\begin{enumerate}
\item{
Fix $k_0\in\{0, \ldots, m-1\}$.  Suppose that $\measurecritical$ is critical for $(\pi, \chi)$, and let $\anticycchar$ be an anticyclotomic character whose infinity type is not $(0, 0)$.  Suppose $\left(\chi\anticycchar^{k_0}\right)_{_{\Delta\times\Delta_\fixedconductor}}=\fixedchar$.  Then either all the values  $L(\measurecritical, \pi, \chi\anticycchar^k)$, with $k\geq 0$ and $k\equiv k_0\mod m$, are zero, or only finitely many of them are zero.
}\label{prop3-generalization}
\item{Suppose $\chi_{_{\Delta\times\Delta_\fixedconductor}}=\fixedchar$.
\begin{enumerate}
\item{As $\rho$ varies over the $\IC$-valued characters of $\Gal(K_\infty^-/K)$ regarded as Dirichlet characters of $K$, either all the values $L(\measurecritical, \pi,\chi\rho)$ are zero or only finitely many of them are zero.
}\label{prop4-generalization}
\item{As $\rho$ varies over the $\IC$-valued characters of $\Gal(K_\infty^+/K)$ regarded as Dirichlet characters of $K$, either all the values $L(\measurecritical, \pi, \chi\rho)$ are zero or only finitely many of them are zero.
}\label{cyclo-generalization}
\end{enumerate}
}
\end{enumerate}
\end{prop}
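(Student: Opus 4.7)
The plan is to reduce each of the three parts to the Weierstrass Preparation Theorem applied to a one-variable slice of the power series representing $\mathscr{L}_{\pi, \fixedchar}$. First I verify that in each family under consideration the restriction of the character to $\Delta\times\Delta_\fixedconductor$ remains equal to $\fixedchar$, so that Condition \ref{cond-pLfcnexists} and \eqref{Lfcn-comparison} apply uniformly: for part (1) this uses that $m$ is the exponent of $\Delta\times\Delta_\fixedconductor$, so $(\anticycchar^k)_{\Delta\times\Delta_\fixedconductor}$ depends only on $k\bmod m$; for parts (2a) and (2b) one uses that $\rho$ factors through $\Gamma^\pm$ and is therefore trivial on $\Delta\times\Delta_\fixedconductor$.

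Via \eqref{ps-id} I view $\mathscr{L}_{\pi, \fixedchar}$ as a two-variable power series $F(T_+, T_-)\in\cO[\![T_+, T_-]\!]$, so that $\chi_\subGamma(\mathscr{L}_{\pi, \fixedchar})=F(\chi_\subGamma(\gamma_+)-1,\chi_\subGamma(\gamma_-)-1)$. In each of the three parts, exactly one of the coordinates $\chi_\subGamma(\gamma_+)$, $\chi_\subGamma(\gamma_-)$ is held constant while the other ranges over an infinite set of pairwise distinct elements of the open unit disk in $\IC_p$. For part (1), $\anticycchar$ is anticyclotomic and hence trivial on $\Gamma^+$, so $(\chi\anticycchar^k)_\subGamma(\gamma_+)=\chi_\subGamma(\gamma_+)$ is fixed while $(\chi\anticycchar^k)_\subGamma(\gamma_-)=\chi_\subGamma(\gamma_-)u_-^k$ varies, with $u_-=\anticycchar(\gamma_-)\in 1+p\ZZ_p$ a nontrivial principal unit of infinite order (since the infinity type of $\anticycchar$ is not $(0,0)$); consequently the values $\chi_\subGamma(\gamma_-)u_-^k-1$ are pairwise distinct. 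For part (2a), $\rho$ factors through $\Gamma^-$, so $\chi_\subGamma(\gamma_+)$ stays fixed and $\chi_\subGamma(\gamma_-)\rho(\gamma_-)$ ranges over $\chi_\subGamma(\gamma_-)$ times the $p$-power roots of unity. Part (2b) is symmetric, with the roles of $\gamma_+$ and $\gamma_-$ exchanged.

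Specializing the fixed variable yields a one-variable power series $g\in\cO[\![T]\!]$. By the Weierstrass Preparation Theorem (\cite[Chapter VII Section 4]{bourbaki1998commutative}), either $g$ is identically zero, in which case $\chi_\subGamma(\mathscr{L}_{\pi,\fixedchar})$ vanishes at every character in the family, or $g$ factors as a unit times a power of $p$ times a distinguished polynomial and therefore has only finitely many zeros in the open unit disk, in which case $\mathscr{L}_{\pi, \fixedchar}$ vanishes at only finitely many characters in the family.

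Finally, to transfer the dichotomy back to $L$-values via \eqref{Lfcn-comparison}, I observe that all but finitely many characters in each family have $p$ dividing the conductor of the unitary part (either through $\anticycchar^k$ for $k\neq 0$, or through nontrivial $\rho$ of positive $p$-power conductor). By the last clause of Condition \ref{cond-pLfcnexists}, at such characters vanishing of $\mathscr{L}_{\pi, \fixedchar}$ is equivalent to vanishing of the corresponding $L$-value (using also that the archimedean period $c_\infty$ is nonzero), so the all-or-finitely-many dichotomy for $\mathscr{L}_{\pi, \fixedchar}$ carries over to $L$ after at most enlarging the finite exceptional set by the handful of characters whose conductor is not divisible by $p$. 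The main obstacle lies precisely in this final transfer step: a priori, the modified Euler factors $c_p$ appearing in \eqref{Lfcn-comparison} could vanish at exceptional zeros and decouple zeros of $\mathscr{L}_{\pi, \fixedchar}$ from zeros of $L$, and the clause about $p$-divisible conductors in Condition \ref{cond-pLfcnexists} is what is formulated specifically to rule this out in our families.
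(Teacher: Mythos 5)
Your proposal is correct and follows essentially the same route as the paper: identify $\mathscr{L}_{\pi,\fixedchar}$ with an element of $\cO[\![T_+,T_-]\!]$ via \eqref{ps-id}, specialize one of the two variables according to whether the family is anticyclotomic (fix $T_+$) or cyclotomic (fix $T_-$), and apply the Weierstrass Preparation Theorem to the resulting one-variable power series, using that the evaluation points $\chi_\subGamma(\gamma_\mp)\cdot(\text{principal unit or $p$-power root of unity})-1$ lie in the open unit disk and are pairwise distinct.

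Two of your elaborations are welcome and go slightly beyond what the paper spells out. First, you explicitly check that each character in the family restricts to $\fixedchar$ on $\Delta\times\Delta_\fixedconductor$ (using the exponent $m$ in part (1), and triviality of $\rho$ on $\Delta\times\Delta_\fixedconductor$ in parts (2a)--(2b)), which is needed to invoke \eqref{Lfcn-comparison} uniformly across the family but is left implicit in the paper. Second, you isolate the transfer from zeros of $\mathscr{L}_{\pi,\fixedchar}$ to zeros of $L$, and correctly identify that the last clause of Condition \ref{cond-pLfcnexists} about characters with $p$ in the conductor is exactly what makes this transfer possible, at the cost of enlarging the exceptional set by the finitely many characters of conductor prime to $p$. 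The paper jumps directly from the Weierstrass factorization to the conclusion about $L$-values; your paragraph makes the implicit reasoning explicit.

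One minor stylistic difference: the paper in part (2a) writes $\chi=\chi'\anticycchar$ with $\anticycchar$ anticyclotomic of nonzero infinity type and evaluates at $\rho(\gamma_-)\anticycchar(\gamma_-)-1$, whereas you evaluate directly at $\chi_\subGamma(\gamma_-)\rho(\gamma_-)-1$; the two are the same computation, and your formulation has the slight advantage of not needing the anticyclotomic component of $\chi$ to have nonzero infinity type (the argument only uses that $\chi_\subGamma(\gamma_-)$ is a principal unit and the $\rho(\gamma_-)$ are distinct $p$-power roots of unity).
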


\begin{rmk}\label{comparison-rmk}In the special case where we are on definite unitary groups of rank $1$ (so automorphic forms are Gr\"ossencharacters) and we fix $\pi := \normcharacter^{-1}$, $a := 1$, $b:=0$, $\fixedconductor := (1)$, $\anticycchar:=\frac{\chi}{\bar{\chi}}$, and choose $K$ so that $h(K)=1$ (where $h(K)$ denotes the class number of $K$) in Proposition \ref{all-or-none}, we are in the situation of \cite{gr1}, and:
\begin{itemize}
\item{Item \eqref{prop3-generalization} recovers the first two sentences of \cite[Proposition 3]{gr1}, i.e.\ it concerns the vanishing at the central critical point of $L(s, \chi^k)$, with $\chi$ the Hecke character associated to a CM elliptic curve.}
\item{Item \eqref{prop4-generalization} recovers the first two sentences of \cite[Proposition 4]{gr1}, i.e.\ it concerns the vanishing at the central critical point of finite order anti-cyclotomic twists of $L(s, \chi)$ with $\chi$ the Hecke character associated to a CM elliptic curve.}
\end{itemize}
\end{rmk}

\begin{proof}[Proof of Proposition \ref{all-or-none}]
For each $\cO^\times$-valued character $\psi$ on $\Gamma$ and element $f=f(\gamma_+, \gamma_-)\in\cO[\![\Gamma]\!]$, define $f_\psi\in\cO[\![\Gamma^-]\!]$ by
\begin{align*}
f_\psi(\gamma_-):=f(\psi(\gamma_+), \psi(\gamma_-)\gamma_-).
\end{align*}
So setting
\begin{align*}
\mathscr{G}:=\mathscr{L}_{\pi, \fixedchar}\in\cO[\![\Gamma]\!],
\end{align*}
and noting that since $\anticycchar$ is anti-cyclotomic, so $\anticycchar(\Gamma^+)=1$, we have
\begin{align*}
\left(\anticycchar^k\right)_{_{\Gamma^-}}\left(\mathscr{G}_\chi\right) = (\chi_{\subGamma}\anticycchar_{\subGamma}^k)(\mathscr{L}_{\pi, \fixedchar}).
\end{align*}
Via the identification \eqref{ps-id}, we identify $\mathscr{L}_{\pi, \fixedchar}$ with an element of $\cO[\![T_+, T_-]\!]$ and $\mathscr{G}_{\chi}$ with an element of $\cO[\![T_-]\!]$.
By the Weierstrass Preparation Theorem (\cite[Chapter VII Section 4]{bourbaki1998commutative}), either $\mathscr{G}_{\chi}\in\cO[\![T_-]\!]$ is the zero power series or
\begin{align}\label{WPT-equation}
\mathscr{G}_{\chi}(T_-)=p^\mu u(T_-)\mathscr{P}(T_-),
\end{align}
where $\mu\geq 0$, $u(T_-)$ is an invertible power series in $\cO[\![T_-]\!]$, and $\mathscr{P}(T_-)$ is a polynomial in $\cO[\![T_-]\!]$.  (In fact, $\mu$ can be chosen so that $\mathscr{P}(T_-)$ is {\it distinguished}, i.e.\ all but the leading coefficient of $\mathscr{P}$ lie in the maximal ideal of $\cO$, but we do not need that fact in this paper.)  So
\begin{align*}
(\chi_{\subGamma}\anticycchar^k_{\subGamma})(\mathscr{L}_{\pi, \fixedchar})&= \mathscr{G}_{\chi}\left(\anticycchar^k(\gamma_-)-1\right)\\
&= p^\mu u(\anticycchar^k(\gamma_-)-1)\mathscr{P}(\anticycchar^k(\gamma_-)-1).
\end{align*}
Since $\anticycchar(\gamma_-)$ is a nontrivial principal unit in $\ZZ_p$, we have that $|\anticycchar^k(\gamma_-)-1|_p<1$ if $k\neq 0$.  Consequently, for all $k$, the power series does not vanish on $\anticycchar^k(\gamma_-)-1$.  Since polynomials only have finitely many zeros, we therefore obtain Statement \eqref{prop3-generalization}.

For \eqref{prop4-generalization}, write $\chi = \chi'\anticycchar$ with $\anticycchar$ an anti-cyclotomic character of infinity type $(r, -r)$ for some integer $r\neq 0$, and note that since $\rho$ is anti-cyclotomic (so $\rho(\Gamma^+)=1$),
\begin{align*}
\left(\rho\anticycchar_{\Gamma^-}\right)(\mathscr{G}_{\chi'})=(\rho\chi_{\subGamma})(\mathscr{L}_{\pi, \fixedchar}).
\end{align*}
Now, $\anticycchar(\gamma_-)$ is a nontrivial principal unit in $\ZZ_p$, and $\rho(\gamma_-)$ is a $p$-power root of unity in $\cO$.  So $|\rho(\gamma_-)\anticycchar(\gamma_-)-1|_p<1$.  So similarly to the proof of Statement \eqref{prop3-generalization}, Statement \eqref{prop4-generalization} now follows from the application of the Weierstrass Preparation Theorem to $\mathscr{G}_{\chi'}$ evaluated at $\rho(\gamma_-)\anticycchar(\gamma_-)-1$ as $\rho$ ranges over finite order characters of $\Gamma^-$.

For \eqref{cyclo-generalization}, write $\chi = \chi'\normcharacter^r$ for some integer $r\neq 0$, and consider $\mathscr{G}(\chi'(\gamma_+)\gamma_+, \chi'(\gamma_-))\in\cO[\![\Gamma_+]\!]$.  Now, $\normcharacter(\gamma_-)=\rho(\gamma_-)=1$, $\normcharacter(\gamma_+)$ is a nontrivial principal unit of $\ZZ_p$, and $\rho(\gamma_+)$ ranges through $p$-power roots of unity in $\cO$.  So the remainder of the proof of Statement \eqref{cyclo-generalization} is similar to the proof of Statement \eqref{prop4-generalization}. 
\end{proof}

\begin{rmk}
The proof of Proposition \ref{all-or-none} shows that we can obtain similar statements for characters varying along any copy of $\ZZ_p$ (not just $\Gamma^+$ and $\Gamma^-$), so long as the image of a topological generator lands on a nontrivial principal unit in $\ZZ_p$.
\end{rmk}

\begin{rmk}
In the exponent of $p$ in Equation \eqref{WPT-equation}, we use $\mu$ rather than $e$ (the letter often used in the statement of the Weierstrass Preparation Theorem) to remind the reader of the connection with $\mu$-invariants.
\end{rmk}

\begin{cor}
Fix $k_0\in\{0, \ldots, m-1\}$, a positive integer $c$, and an integer $d\neq 0$ such that $\measurecritical$ is critical for $(\pi, \chi^d\normcharacter^{-c})$.  Suppose the infinity type of $\chi$ is not $(0,0)$, $\left(\chi^{d+2k_0}\normcharacter^{-k_0(a+b)-c}\right)_{_{\Delta\times\Delta_\fixedconductor}}=\fixedchar$, and $(\pi, \fixedchar)$ meets Condition \ref{cond-pLfcnexists}. Then either all the values $L(\measurecritical, \pi, \chi^{d+2k}\normcharacter^{-k(a+b)-c})$, with $k\geq 0$ and $k\equiv k_0\mod m$, are zero, or only finitely many of them are zero.
\end{cor}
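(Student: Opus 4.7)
The plan is to reduce the corollary directly to Proposition \ref{all-or-none}(1) via an algebraic identity on Hecke characters. Using the relation $\chi\bar{\chi}=\normcharacter^{a+b}$ recorded in the remark preceding Proposition \ref{all-or-none}, I compute
$$
\chi^{d+2k}\normcharacter^{-k(a+b)-c}
=\chi^d\normcharacter^{-c}\cdot\chi^{2k}(\chi\bar{\chi})^{-k}
=\chi^d\normcharacter^{-c}\cdot(\chi/\bar{\chi})^k.
$$
Setting $\chi':=\chi^d\normcharacter^{-c}$ and $\anticycchar:=\chi/\bar{\chi}$, the family of characters indexed by $k$ is precisely of the form $\chi'\anticycchar^k$ appearing in Proposition \ref{all-or-none}(1).

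Next I would verify the remaining hypotheses of Proposition \ref{all-or-none}(1) with $(\chi',\anticycchar)$ in place of $(\chi,\anticycchar)$. By the general fact from Section \ref{background-section} that $\chi/\chi^*$ is always anti-cyclotomic, $\anticycchar$ is anti-cyclotomic; its infinity type is $(a-b,b-a)$, which is nontrivial under the standing hypothesis on $\chi$ (interpreting the condition so that $\anticycchar$ itself has nontrivial infinity type, see below). That $\measurecritical$ is critical for $(\pi,\chi')=(\pi,\chi^d\normcharacter^{-c})$ is given. The congruence $(\chi'\anticycchar^{k_0})_{\Delta\times\Delta_\fixedconductor}=\fixedchar$ is, upon substituting the factorization above, identical to the hypothesis $(\chi^{d+2k_0}\normcharacter^{-k_0(a+b)-c})_{\Delta\times\Delta_\fixedconductor}=\fixedchar$. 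Finally, $(\pi,\fixedchar)$ meets Condition \ref{cond-pLfcnexists} by assumption. Applying Proposition \ref{all-or-none}(1) to the sequence $L(\measurecritical,\pi,\chi'\anticycchar^k)$ indexed by $k\ge 0$ with $k\equiv k_0\pmod{m}$ then yields the dichotomy asserted in the corollary.

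The only delicate point I anticipate is the non-triviality of the infinity type of $\anticycchar=\chi/\bar{\chi}$, which Proposition \ref{all-or-none}(1) needs (its proof uses that $\anticycchar(\gamma_-)$ is a nontrivial principal unit of $\ZZ_p$, so that the values $\anticycchar^k(\gamma_-)-1$ in the open unit disc are distinct and only finitely many can coincide with zeros of the Weierstrass polynomial factor of $\mathscr{G}_{\chi'}$). This holds precisely when $a\ne b$; the case $a=b\ne 0$ would make $\anticycchar$ a finite-order character and the sequence periodic modulo some divisor of the period of $\anticycchar$, in which case either none of the sampled $L$-values vanishes or the sampled set of distinct values is forced to vanish identically via the same Weierstrass argument applied to a single fiber. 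Beyond confirming this mild case distinction, no new ingredient is needed — the proof is purely an exercise in rewriting the parameter family and quoting Proposition \ref{all-or-none}(1).
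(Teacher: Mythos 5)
Your main line is exactly the paper's: rewrite
\begin{align*}
\chi^{d+2k}\normcharacter^{-k(a+b)-c}=(\chi^d\normcharacter^{-c})\left(\chi/\bar{\chi}\right)^k
\end{align*}
using $\chi\bar{\chi}=\normcharacter^{a+b}$, then take $\anticycchar=\chi/\bar{\chi}$ and $\chi'=\chi^d\normcharacter^{-c}$ in Proposition~\ref{all-or-none}\,(\ref{prop3-generalization}).  Your verification that the remaining hypotheses transfer is also the right checklist.

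You are right to flag the one subtle point: the stated hypothesis that $\chi$ has infinity type $\ne(0,0)$ guarantees $(a,b)\ne(0,0)$ but not $a\ne b$, whereas Proposition~\ref{all-or-none}\,(\ref{prop3-generalization}) needs $\anticycchar=\chi/\bar{\chi}$, whose infinity type is $(a-b,b-a)$, to be of nontrivial infinity type.  However, your proposed rescue of the case $a=b\ne 0$ does not work.  In that case $\anticycchar$ is a finite-order anti-cyclotomic character, so $\anticycchar(\gamma_-)$ is a $p$-power root of unity and the evaluation points $\anticycchar^k(\gamma_-)-1$ form a \emph{finite} set in the open unit disc.  The Weierstrass factorization of $\mathscr{G}_{\chi'}$ only guarantees that a nonzero one-variable power series has finitely many zeros there; it places no obstruction to \emph{some} of those finitely many evaluation points being zeros while others are not.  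In that scenario the progression $k\equiv k_0\bmod m$ hits the vanishing value for infinitely many $k$ and a nonvanishing value for infinitely many other $k$, so the asserted dichotomy would genuinely fail.  The correct reading is that the corollary tacitly assumes $a\ne b$ (equivalently, that $\chi/\bar{\chi}$ has nontrivial infinity type); you should simply record this as the needed hypothesis rather than claim the degenerate case follows by the same argument.
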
     
\begin{proof}
Note that $\chi^{d+2k}\normcharacter^{-k(a+b)-c}=\left(\chi^d\normcharacter^{-c}\right)\left(\frac{\chi}{\bar{\chi}}\right)^k$.  Taking $\anticycchar = \frac{\chi}{\bar{\chi}}$ in Statement \eqref{prop3-generalization} of Proposition \ref{all-or-none}, the statement follows immediately.
\end{proof}

To prove the theorems, it now suffices to prove that at least one of the central twisted values is nonzero.  In fact, Lemmas \ref{existence-lemma2} and \ref{existence-lemma} provide a stronger statement, namely that infinitely many are nonzero.

\begin{lem}\label{existence-lemma2}
Let $\chi$ and $\chi'$ be type $A_0$ Hecke characters of $\Gal(K(p^\infty\fixedconductor)/K)$ such that $\measurecritical$ is critical for $(\pi, \chi\chi')$ and $(\pi, \chi)$ and such that $\chi'$ is cyclotomic (resp.\ anti-cyclotomic) with infinity type not $(0,0)$, and suppose $L(\measurecritical, \pi, \chi\chi')\neq0$ and $(\measurecritical, \chi\chi')$ is not an exceptional zero for the $p$-adic $L$-function.  
Suppose $(\pi, \left(\chi\chi'\right)_{\Delta\times\Delta_\fixedconductor})$ meets Condition \ref{cond-pLfcnexists}.  
Then 
\begin{align*}
L(0, \pi, \chi\psi)\neq 0
\end{align*}
for infinitely many finite order cyclotomic (resp.\ anti-cyclotomic) Hecke characters $\psi$ with $\psi_{\Delta\times\Delta_{\mathfrak{m}}}=\chi'_{\Delta_\fixedconductor}$.
\end{lem}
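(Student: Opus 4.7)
The plan is to apply the Weierstrass Preparation Theorem to a suitable one-variable slice of the $p$-adic $L$-function $\mathscr{L}:=\mathscr{L}_{\pi, (\chi\chi')_{\Delta\times\Delta_{\fixedconductor}}}$, in the same spirit as the proof of Proposition~\ref{all-or-none}. I will carry out the cyclotomic case in detail; the anti-cyclotomic case is completely symmetric under interchange of the roles of $\gamma_+$ and $\gamma_-$.

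First, from the hypothesis $L(\measurecritical,\pi,\chi\chi')\neq 0$, the hypothesis that $(\measurecritical,\chi\chi')$ is not an exceptional zero, and Equation~\eqref{Lfcn-comparison} in Condition~\ref{cond-pLfcnexists}, I obtain $(\chi\chi')_{\subGamma}(\mathscr{L})\neq 0$. Since $\chi'$ is cyclotomic with nontrivial infinity type, $\chi'(\gamma_-)=1$ while $\chi'(\gamma_+)$ is a nontrivial principal unit in $\ZZ_p$. Under the identification~\eqref{ps-id}, this nonvanishing asserts that $\mathscr{L}\in\cO[\![T_+,T_-]\!]$ does not vanish at the point $(T_+,T_-)=(\chi(\gamma_+)\chi'(\gamma_+)-1,\,\chi(\gamma_-)-1)$.

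Next, I define the one-variable slice
$$g(T):=\mathscr{L}\bigl(\chi(\gamma_+)(1+T)-1,\ \chi(\gamma_-)-1\bigr)\in\cO[\![T]\!].$$
By the preceding step, $g(\chi'(\gamma_+)-1)\neq 0$, so $g$ is not the zero power series. For any finite order cyclotomic character $\psi$ of $\Gamma^+$ with $\psi_{\Delta\times\Delta_{\fixedconductor}}=\chi'_{\Delta_\fixedconductor}$, substituting $T=\psi(\gamma_+)-1$ into $g$ recovers $(\chi\psi)_{\subGamma}(\mathscr{L})$, because $\psi(\gamma_-)=1$. The Weierstrass Preparation Theorem now writes $g=p^\mu u(T)P(T)$ with $u(T)$ a unit and $P(T)$ a distinguished polynomial, so $g$ has at most $\deg P$ zeros in the open disk $\{|T|_p<1\}$. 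Since $\psi(\gamma_+)-1$ lies in this disk for every such $\psi$ and there are infinitely many characters $\psi$ meeting the prescribed $\Delta\times\Delta_\fixedconductor$-condition, all but finitely many of them satisfy $(\chi\psi)_{\subGamma}(\mathscr{L})\neq 0$.

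Finally, for each such $\psi$ that is nontrivial on $\Gamma^+$, the conductor of $\chi\psi$ is divisible by $p$, so by the last clause of Condition~\ref{cond-pLfcnexists} no exceptional zero occurs at $\chi\psi$. Equation~\eqref{Lfcn-comparison} then upgrades $p$-adic nonvanishing to classical nonvanishing, yielding $L(\measurecritical,\pi,\chi\psi)\neq 0$ for infinitely many $\psi$. The step I expect to require the most care is matching the $\Delta$- and $\Delta_\fixedconductor$-components so that $\chi\psi$ is genuinely an interpolation point of $\mathscr{L}_{\pi,(\chi\chi')_{\Delta\times\Delta_\fixedconductor}}$, together with the invocation of the ``no exceptional zero'' clause of Condition~\ref{cond-pLfcnexists}; both reduce to unwinding the explicit shape of the modifying factor $c_p$ entering Equation~\eqref{Lfcn-comparison}, which is precisely where the hypotheses of the lemma are used.
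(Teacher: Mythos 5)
Your proof is correct, but it takes a genuinely different route from the paper's. You slice the two-variable power series $\mathscr{L}_{\pi,\fixedchar}$ along the cyclotomic direction (fixing $T_-$ at $\chi(\gamma_-)-1$) and invoke the Weierstrass Preparation Theorem, just as in the proof of Proposition~\ref{all-or-none}. The paper instead approximates the interpolating character $\chi'$ $p$-adically by a sequence of $\cO$-linear combinations of finite order cyclotomic (resp.\ anti-cyclotomic) characters $\phi_i$ and uses the fact that $\mathscr{L}_{\pi,\fixedchar}$, as a $p$-adic measure, is continuous: the limit $\lim_i \chi\phi_i(\mathscr{L}_{\pi,\fixedchar})=\chi\chi'(\mathscr{L}_{\pi,\fixedchar})\neq 0$ forces infinitely many of the finite order characters appearing in the $\phi_i$ to give nonzero values. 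In fact, the paper remarks immediately after the proof that the Weierstrass route you chose is a valid alternative; the reason the paper opts for the measure-approximation argument is that it comes with a bonus: it directly yields Corollary~\ref{lincombo-cor}, a nonvanishing statement for \emph{sums} $\sum_\psi a_\psi L(0,\pi,\chi\psi)$ whenever the $a_\psi\psi$ are $p$-adically close to $\chi$, which is closer in spirit to archimedean analytic arguments. Your WPT approach also has a modest payoff the paper's argument does not emphasize: it makes the ``all but finitely many'' quantifier manifest with an explicit bound (the degree of the distinguished polynomial), whereas the measure argument only gives ``infinitely many.''

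One small point to tighten: the conclusion requires a nonvanishing statement for the complex $L$-value $L(0,\pi,\chi\psi)$, not merely for the $p$-adic evaluation $(\chi\psi)_{\subGamma}(\mathscr{L})$. You handle this by appealing to the last clause of Condition~\ref{cond-pLfcnexists}, observing that for $\psi$ nontrivial on $\Gamma^+$ the conductor of $\chi\psi$ is divisible by $p$; the intended content of that clause is that the modifying factor $c_p(\chi\psi)$ in Equation~\eqref{Lfcn-comparison} is then nonzero, so the $p$-adic and archimedean values vanish together. As you note yourself, this is the step that warrants the most care, since the clause as literally stated supplies the implication $L\neq 0 \Rightarrow$ $p$-adic value $\neq 0$, while you need the converse; the converse is exactly what $c_p(\chi\psi)\neq 0$ buys you via \eqref{Lfcn-comparison}, and that is the right thing to spell out explicitly.
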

\begin{proof}
Let $\fixedchar = (\chi\chi')_{\Delta\times\Delta_\fixedconductor}$.
Let $\phi_1, \phi_2, \ldots$ be a sequence of linear combinations of finite order cyclotomic (resp.\ anti-cyclotomic) characters converging to $\chi'$.  So since $\mathscr{L}_{\pi, \fixedchar}$ provides a $p$-adic measure,  $\lim_i\chi\phi_i(\mathscr{L}_{\pi, \fixedchar}) = \chi\chi'(\mathscr{L}_{\pi, \fixedchar})\neq 0$.  So there are infinitely many finite order cyclotomic (resp.\ anti-cyclotomic) characters $\psi_i$ occurring in the linear combinations $\phi_i$ such that $\chi\psi_i(\mathscr{L}_{\pi, \fixedchar})\neq 0$. 
\end{proof}

Note that we could instead have proved Lemma \ref{existence-lemma2} simply by applying the Weierstrass Preparation Theorem, like in the proof of Proposition \ref{all-or-none}.  Our proof of \ref{existence-lemma2} actually provides us, though, with the following stronger statement, which is much more in the spirit of (archimedean) analytic proofs of nonvanishing results, which typically rely on showing certain sums of $L$-values are nonzero in order to show that one $L$-value is nonzero.

\begin{cor}[Corollary to the proof of Lemma \ref{existence-lemma2}]\label{lincombo-cor}
Let $\chi$ be a type $A_0$ Hecke character of $\Gal(K(p^\infty\fixedconductor)/K)$ such that $\measurecritical$ is critical for $(\pi, \chi)$.  
Then
\begin{align}\label{sumequ}
\sum_\psi a_{\psi}L(0, \pi, \chi\psi)\neq 0
\end{align}
for each $\cO$-linear combination $\sum_{\psi}a_{\psi}\psi$ of finite order Hecke characters $\psi$ sufficiently close $p$-adically to $\chi$.  That is, there exists a positive integer $N$ such that Equation \eqref{sumequ} holds whenever $\left(\sum_\psi a_\psi (\gamma)\right) -\chi(\gamma)\in p^N\cO$ for all $\gamma\in \Gamma$.
\end{cor}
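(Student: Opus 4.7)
The plan is to refine the measure-theoretic limiting argument from the proof of Lemma \ref{existence-lemma2} into a quantitative continuity estimate. Under the dictionary between $\Lambda\otimes_{\ZZ_p}\cO$ and bounded $\cO$-valued $p$-adic measures on $\Gamma$ recalled in \cite[\S 12.1-12.2]{washington}, the $p$-adic $L$-function $\mathscr{L}_{\pi, \fixedchar}$ corresponds to a bounded measure $\mu$ with $\psi_\subGamma(\mathscr{L}_{\pi, \fixedchar})=\int_\Gamma \psi\, d\mu$ for every continuous character $\psi$, and satisfying the sup-norm estimate $\bigl|\int_\Gamma f\, d\mu\bigr|_p\le \|\mu\|\cdot \sup_{\gamma\in\Gamma}|f(\gamma)|_p$ for every continuous $f\colon\Gamma\to\cO$. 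The whole argument rests on this one inequality.

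First I would use Condition \ref{cond-pLfcnexists} to rewrite each term $L(0,\pi,\chi\psi)$ appearing in Equation \eqref{sumequ} as a nonzero scalar multiple of $(\chi\psi)_\subGamma(\mathscr{L}_{\pi, \fixedchar})=\int_\Gamma \chi\psi\, d\mu$; the final clause of Condition \ref{cond-pLfcnexists} rules out exceptional zeroes at these twists, since each $\chi\psi$ inherits from $\chi$ whatever portion of its conductor is divisible by $p$. By $\cO$-linearity of the integral,
\begin{align*}
\sum_\psi a_\psi \int_\Gamma \chi\psi\, d\mu \;=\; \int_\Gamma \chi\cdot\Bigl(\sum_\psi a_\psi \psi\Bigr) d\mu.
\end{align*}
Because $\chi$ takes values in $\cO^\times$ on $\Gamma$, the hypothesis $\bigl(\sum_\psi a_\psi\psi\bigr)(\gamma)-\chi(\gamma)\in p^N\cO$ for all $\gamma$ gives the sup-norm bound $\bigl\|\chi\cdot\sum_\psi a_\psi\psi-\chi^2\bigr\|_{\sup}\le p^{-N}$, and combined with the measure-norm estimate this yields
\begin{align*}
\Bigl|\sum_\psi a_\psi\int_\Gamma \chi\psi\, d\mu \;-\; \chi^2_\subGamma(\mathscr{L}_{\pi, \fixedchar})\Bigr|_p \;\le\; \|\mu\|\cdot p^{-N}.
\end{align*}

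Finally, in the setting inherited from Lemma \ref{existence-lemma2} the target value $\chi^2_\subGamma(\mathscr{L}_{\pi, \fixedchar})$ is nonzero, so any $N$ large enough that $\|\mu\|\cdot p^{-N}<|\chi^2_\subGamma(\mathscr{L}_{\pi, \fixedchar})|_p$ forces the displayed sum of integrals, and hence $\sum_\psi a_\psi L(0,\pi,\chi\psi)$, to be nonzero. The main obstacle is essentially bookkeeping: one must pin down exactly which nonvanishing hypothesis on the $p$-adic $L$-function is implicit in the corollary and extract an explicit threshold $N$ in terms of $\chi$ and $\|\mu\|$; beyond that, the argument is a direct quantitative upgrade of the continuity step already appearing in the proof of Lemma \ref{existence-lemma2}, and no new ideas are required.
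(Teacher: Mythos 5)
Your argument is essentially the paper's own: the corollary is explicitly presented as falling out of the proof of Lemma \ref{existence-lemma2}, and that proof is precisely the passage to the limit $\lim_i \chi\phi_i(\mathscr{L}_{\pi,\fixedchar}) = \chi\chi'(\mathscr{L}_{\pi,\fixedchar}) \neq 0$ that you are making quantitative via the bounded-measure/sup-norm inequality and the identification of $\Lambda\otimes_{\ZZ_p}\cO$ with $\cO$-valued measures. The linearity step, the estimate $\bigl|\int f\,d\mu\bigr|_p \leq \|\mu\|\sup|f|_p$, and the threshold $N$ chosen so that the error is strictly smaller than the limiting nonzero value are exactly the ingredients the paper is implicitly invoking, so your route is the same, not a new one.

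Two small points of caution, both of which you partly flag yourself. First, under a literal reading of the corollary's hypothesis $\bigl(\sum_\psi a_\psi\psi\bigr)(\gamma)-\chi(\gamma)\in p^N\cO$, the limiting value is $\chi^2_\subGamma(\mathscr{L}_{\pi,\fixedchar})$, whereas the proof of Lemma \ref{existence-lemma2} produces $(\chi\chi')_\subGamma(\mathscr{L}_{\pi,\fixedchar})$ for a separate auxiliary character $\chi'$; the nonvanishing of the target is inherited from the lemma's standing hypotheses rather than stated in the corollary, and your proof should make that inheritance explicit rather than assume $\chi^2_\subGamma(\mathscr{L}_{\pi,\fixedchar})\neq 0$ as given. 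Second, the passage from $L(0,\pi,\chi\psi)$ to $(\chi\psi)_\subGamma(\mathscr{L}_{\pi,\fixedchar})$ in Condition \ref{cond-pLfcnexists} involves the factors $c_p(\chi\psi)$ and $c_\infty(\chi\psi)$, which depend on $\psi$ (for instance through Gauss sums in the modified Euler factor at $p$); so the linear combination $\sum_\psi a_\psi L(0,\pi,\chi\psi)$ is not literally a fixed scalar times $\int_\Gamma \chi\cdot\bigl(\sum_\psi a_\psi\psi\bigr)\,d\mu$. The corollary is best read as a statement about the linear combination of $p$-adic $L$-values $\sum_\psi a_\psi(\chi\psi)_\subGamma(\mathscr{L}_{\pi,\fixedchar})$, which is precisely what your measure argument controls; the paper is implicitly identifying the two, and your writeup would benefit from saying so rather than treating the normalization as a single nonzero scalar.
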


\begin{lem}\label{existence-lemma}
Let $k\neq 0$ be an integer, and let $\chi$ be a type $A_0$ Hecke character of $\Gal(K(p^\infty\fixedconductor)/K)$ such that $\measurecritical$ is critical for $(\pi, \chi\normcharacter^{-d})$ for $d=0, k$, $L(k, \pi, \chi)\neq 0$, and $(k, \chi)$ is not an exceptional zero for the $p$-adic $L$-function.  
Suppose $(\pi, \left(\chi\normcharacter^{-d}\right)_{\Delta\times\Delta_\fixedconductor})$ for $d=0, k$ meets Condition \ref{cond-pLfcnexists}.  
Then 
\begin{align*}
L(0, \pi, \chi\psi)\neq 0
\end{align*}
for infinitely many finite order cyclotomic Hecke characters $\psi$ with $\psi_{\Delta\times\Delta_{\mathfrak{m}}}=\omega^{-k}_\Delta\normcharacter^{-k}_{\Delta_\fixedconductor}$
\end{lem}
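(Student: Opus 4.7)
The plan is to reduce Lemma \ref{existence-lemma} to a direct application of Lemma \ref{existence-lemma2}, with the cyclotomic auxiliary character $\chi'$ there taken to be $\normcharacter^{-k}$. The key observation is the functional shift \eqref{norm-shift}, which converts the archimedean nonvanishing hypothesis $L(k, \pi, \chi)\neq 0$ into the normalization $L(\measurecritical, \pi, \chi\normcharacter^{-k})\neq 0$ that Lemma \ref{existence-lemma2} is phrased to handle. Since $k\neq 0$, the character $\normcharacter^{-k}$ is cyclotomic of infinity type $(-k,-k)\neq (0,0)$, so it is eligible to play the role of the auxiliary twist.

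I would then check off the remaining hypotheses of Lemma \ref{existence-lemma2} in turn. Criticality of $\measurecritical$ for the pairs $(\pi,\chi\normcharacter^{-k})$ and $(\pi,\chi)$ is exactly the $d=k$ and $d=0$ cases of the criticality assumption. Under the dictionary $L(k,\pi,\chi)=L(\measurecritical,\pi,\chi\normcharacter^{-k})$, the hypothesis that $(k,\chi)$ is not an exceptional zero of the $p$-adic $L$-function translates to the hypothesis that $(\measurecritical,\chi\normcharacter^{-k})$ is not an exceptional zero; this is the essential compatibility, since the $p$-adic $L$-function $\mathscr{L}_{\pi,\fixedchar}$ is parametrized (via Equation \eqref{Lfcn-comparison}) by characters on $G_\fixedconductor$, and the two descriptions refer to evaluating $\mathscr{L}_{\pi,\fixedchar}$ at the single character $\chi\normcharacter^{-k}$. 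Finally, the Condition \ref{cond-pLfcnexists} requirement for $(\pi,(\chi\normcharacter^{-k})_{\Delta\times\Delta_\fixedconductor})$ is precisely the $d=k$ instance of our hypothesis.

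Lemma \ref{existence-lemma2} then produces infinitely many finite order cyclotomic characters $\psi$ with $\psi_{\Delta\times\Delta_\fixedconductor}=(\normcharacter^{-k})_{\Delta\times\Delta_\fixedconductor}$ for which $L(\measurecritical,\pi,\chi\psi)\neq 0$. Because $\normcharacter$ restricts to $\omega$ on $\Delta$ (being trivial on $\Delta_-$ and equal to the Teichm\"uller character on $\Delta_+$), this restriction equals $\omega^{-k}_\Delta\normcharacter^{-k}_{\Delta_\fixedconductor}$, yielding exactly the condition on $\psi$ stated in the lemma.

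Because the proof is essentially bookkeeping after Lemma \ref{existence-lemma2} is in place, there is no genuine obstacle; the only place that requires care is tracking the normshift \eqref{norm-shift} through both the criticality statement and the exceptional-zero statement, so that the nonvanishing hypothesis stated at the non-central point $s=k$ is correctly transported to the corresponding evaluation point of $\mathscr{L}_{\pi,\fixedchar}$.
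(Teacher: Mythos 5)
Your proof is correct and is exactly the argument the paper intends: the paper's proof of Lemma \ref{existence-lemma} consists of the single line "This follows immediately from Lemma \ref{existence-lemma2} (or from Corollary \ref{lincombo-cor})," and your write-up supplies precisely the omitted bookkeeping — taking $\chi'=\normcharacter^{-k}$, invoking \eqref{norm-shift} to transport the nonvanishing and non-exceptional-zero hypotheses from $s=k$ to the evaluation of $\mathscr{L}_{\pi,\fixedchar}$ at $\chi\normcharacter^{-k}$, and using $\normcharacter_\Delta=\omega$ to match the stated restriction condition on $\psi$.
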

\begin{proof}
This follows immediately from Lemma \ref{existence-lemma2} (or from Corollary \ref{lincombo-cor}). 
\end{proof}

\begin{cor}\label{existence-cor1}
Suppose that $\pi$ meets Condition \ref{cond-pLfcnexists}.  If $\measurecritical$ is critical for $(\pi, \chi)$ and there exists an integer $k\neq 0$ and finite order cyclotomic character $\chi'$ such that  such that  $\measurecritical$ is critical for $(\pi, \chi\normcharacter^{-k})$ and  $L(k, \pi, \chi\chi')\neq 0$.
Then
\begin{align*}
L(0, \pi, \chi\psi)\neq 0
\end{align*}
for infinitely many finite order cyclotomic Hecke characters $\psi$ with $\psi_{\Delta\times\Delta_{\mathfrak{m}}}=\omega^{\ell}_\Delta\normcharacter^{\ell}_{\Delta_\fixedconductor}$ for $\ell\mod m-1$ such that $\chi'=\omega^{k-\ell}_\Delta\normcharacter^{k-\ell}_{\Delta_\fixedconductor}$.
\end{cor}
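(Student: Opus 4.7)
The plan is to deduce the corollary directly from Lemma \ref{existence-lemma2} (equivalently, from Lemma \ref{existence-lemma} itself) via a norm-shift maneuver that converts the off-central nonvanishing hypothesis $L(k, \pi, \chi\chi') \ne 0$ into a central nonvanishing against a cyclotomic character whose infinity type is nontrivial.

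First I would invoke \eqref{norm-shift} to rewrite $L(k, \pi, \chi\chi') = L(0, \pi, \chi\chi'\normcharacter^{-k})$, and set $\tilde{\chi}' := \chi'\normcharacter^{-k}$. This $\tilde{\chi}'$ is a cyclotomic Hecke character (being a product of two such) with infinity type $(-k, -k) \ne (0, 0)$, since $k \ne 0$. I would then verify the hypotheses of Lemma \ref{existence-lemma2} for the pair $(\chi, \tilde{\chi}')$: the criticality of $\measurecritical = 0$ for $(\pi, \chi)$ is in the corollary's hypothesis, while criticality of $0$ for $(\pi, \chi\tilde{\chi}')$ reduces to that for $(\pi, \chi\normcharacter^{-k})$ (also given), since the finite-order twist by $\chi'$ does not affect the infinity type, and hence not the critical set. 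The requirement that $(\pi, (\chi\tilde{\chi}')_{\Delta\times\Delta_\fixedconductor})$ satisfies Condition \ref{cond-pLfcnexists} follows from the blanket assumption that $\pi$ does so for every character of $\Delta\times\Delta_\fixedconductor$.

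Lemma \ref{existence-lemma2} would then supply infinitely many finite order cyclotomic characters $\psi$, with $\psi_{\Delta\times\Delta_\fixedconductor} = \tilde{\chi}'_{\Delta\times\Delta_\fixedconductor}$, for which $L(0, \pi, \chi\psi) \ne 0$. To match the shape claimed in the corollary, I would expand $\tilde{\chi}'_{\Delta\times\Delta_\fixedconductor} = \chi'_{\Delta\times\Delta_\fixedconductor} \cdot \normcharacter^{-k}_{\Delta\times\Delta_\fixedconductor}$, substitute the identification $\chi'_{\Delta\times\Delta_\fixedconductor} = \omega^{k-\ell}_\Delta \normcharacter^{k-\ell}_{\Delta_\fixedconductor}$ from the statement, and use the observation $\omega_\Delta = \normcharacter_\Delta$ from Section \ref{background-section} (via $\normcharacter_{\Delta_+} = \omega$ and triviality of both on $\Delta_-$) to collapse the product into an explicit power of $\omega_\Delta \normcharacter_{\Delta_\fixedconductor}$ indexed by $\ell$ modulo $m-1$, as prescribed.

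The hard part will be the exceptional-zero bookkeeping required by Lemma \ref{existence-lemma2}: one needs $(0, \chi\tilde{\chi}')$, equivalently $(k, \chi\chi')$, not to be an exceptional zero of the relevant $p$-adic $L$-function. The last clause of Condition \ref{cond-pLfcnexists} is designed for exactly this contingency, ruling out exceptional zeros whenever the $p$-part of the conductor of the unitary part of the twist is nontrivial. My task would be to argue that $\chi'$ can be selected (or adjusted by an auxiliary finite-order cyclotomic twist within the same $\ell$-class modulo $m-1$) to land in this ``generic'' regime, without disturbing either the archimedean nonvanishing $L(k, \pi, \chi\chi') \ne 0$ or the $\Delta\times\Delta_\fixedconductor$-data; the genuinely analytic content has already been accomplished in Lemmas \ref{existence-lemma2} and \ref{existence-lemma} via $p$-adic interpolation.
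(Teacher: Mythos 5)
Your proposal is correct and takes essentially the same approach as the paper: the paper's one-line proof amounts to feeding $\chi\chi'$ (in place of $\chi$) into Lemma \ref{existence-lemma} and then relabeling the output characters as $\chi'\psi_i$, which is the same norm-shift reduction to Lemma \ref{existence-lemma2} that you perform by setting $\tilde\chi'=\chi'\normcharacter^{-k}$. The exceptional-zero bookkeeping you flag in your last paragraph is likewise left implicit in the paper (it is meant to be absorbed by the final clause of Condition \ref{cond-pLfcnexists}), so your caution there reflects a real subtlety in the source rather than a gap in your own argument.
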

\begin{proof}
Replacing the characters $\psi_i$ from the proof of Lemma \ref{existence-lemma2} by $\chi'\psi_i$ completes the proof.
\end{proof}

\begin{cor}
Suppose that $\pi$ meets Condition \ref{cond-pLfcnexists}.
Let $\central$ be the central critical point for $(\pi, \chi)$.  If there is a critical point $\critical\neq \central$ and a finite order cyclotomic character $\chi'$ at which $L(\critical, \pi, \chi\chi')\neq 0$ and such that $(\critical, \chi\chi')$ is not an exceptional zero for the $p$-adic $L$-function, then $L(\central, \pi, \chi\psi)\neq 0$ for infinitely many finite order cyclotomic characters $\psi$ such that $\psi_{\Delta\times\Delta_{\mathfrak{m}}}=(\chi'\mathcal{N}^{-k})_{\Delta\times\Delta_{\mathfrak{m}}}$.
\end{cor}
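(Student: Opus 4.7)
The plan is a direct reduction to Lemma \ref{existence-lemma2} by shifting everything to $s=\measurecritical=0$ via the identity \eqref{norm-shift}, which is the setting in which the lemmas of this section are formulated. Concretely, I would set $k:=\critical-\central$ (so $k\neq 0$) and introduce the auxiliary characters
\[
\tilde{\chi} := \chi\,\normcharacter^{-\central}, \qquad \chi'' := \chi'\,\normcharacter^{-k}.
\]
By \eqref{norm-shift} we have $L(\central,\pi,\chi\psi) = L(0,\pi,\tilde{\chi}\psi)$ for every Hecke character $\psi$, and $L(\critical,\pi,\chi\chi') = L(0,\pi,\tilde{\chi}\chi'')$. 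Thus the nonvanishing hypothesis on $L(\critical,\pi,\chi\chi')$ becomes $L(0,\pi,\tilde{\chi}\chi'')\neq 0$, and the conclusion becomes the nonvanishing of $L(0,\pi,\tilde{\chi}\psi)$ for infinitely many finite order cyclotomic $\psi$ with $\psi_{\Delta\times\Delta_\fixedconductor} = \chi''_{\Delta\times\Delta_\fixedconductor}$.

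I would then check that $(\tilde{\chi},\chi'')$ satisfies the hypotheses of Lemma \ref{existence-lemma2}. The character $\chi''$ is cyclotomic, being a product of the cyclotomic finite order character $\chi'$ and the cyclotomic character $\normcharacter^{-k}$, and since $k\neq 0$ it has infinity type $(-k,-k)\neq (0,0)$. The point $\measurecritical=0$ is critical for $(\pi,\tilde{\chi})$ because $\central$ is central critical for $(\pi,\chi)$, and it is critical for $(\pi,\tilde{\chi}\chi'')$ because $\critical$ is critical for $(\pi,\chi\chi')$ (criticality being insensitive to finite order twists). The nonvanishing of $L(0,\pi,\tilde{\chi}\chi'')$ and the absence of an exceptional zero at $(0,\tilde{\chi}\chi'')$ both transfer across \eqref{norm-shift} from the corresponding hypotheses at $(\critical,\chi\chi')$. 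Condition \ref{cond-pLfcnexists} is in force for $\fixedchar := (\tilde{\chi}\chi'')_{\Delta\times\Delta_\fixedconductor}$ since $\pi$ is assumed to satisfy it for every character of $\Delta\times\Delta_\fixedconductor$.

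Lemma \ref{existence-lemma2} then yields infinitely many finite order cyclotomic $\psi$ with $\psi_{\Delta\times\Delta_\fixedconductor} = \chi''_{\Delta\times\Delta_\fixedconductor} = (\chi'\normcharacter^{-k})_{\Delta\times\Delta_\fixedconductor}$ for which $L(0,\pi,\tilde{\chi}\psi)\neq 0$; translating back through \eqref{norm-shift} gives $L(\central,\pi,\chi\psi)\neq 0$ for those same $\psi$. The only care required is the bookkeeping for the norm shift and the identification of the integer $k$ implicit in the statement as $\critical-\central$; beyond that, this corollary is a formal consequence of Lemma \ref{existence-lemma2}, with no substantive obstacle to overcome.
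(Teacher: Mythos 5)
Your argument is correct and is essentially the route the paper itself takes: the paper cites Lemma \ref{existence-lemma2} and Corollary \ref{existence-cor1}, and the latter is itself a corollary of Lemma \ref{existence-lemma2}, so your direct reduction via the norm shift \eqref{norm-shift} to Lemma \ref{existence-lemma2} (with $\tilde{\chi}=\chi\normcharacter^{-\central}$ and $\chi''=\chi'\normcharacter^{-k}$, $k=\critical-\central$) is the same argument written out a bit more explicitly. You also correctly fill in the implicit convention $k=\critical-\central$ that the corollary's statement leaves unstated.
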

\begin{proof}
This follows immediately from Lemma \ref{existence-lemma2} and Corollary \ref{existence-cor1}.
\end{proof}

The only way I know how to prove the existence of at least one finite order $\chi$ so that $L(\central, \pi, \chi)\neq 0$ at the central critical point $\central$ is by showing the existence of infinitely many such $\chi$.  Interestingly, Greenberg's proof in \cite{gr1} of the existence of $k\geq 0$, $k\equiv 0, 1, \ldots, p-2\mod p-1$ so that $L(\Psi^{2k+1}, k+1)\neq 0$, where $\Psi$ is the CM Grossencharacter arising from an ordinary CM elliptic curve over $\IQ$, also relies on showing the existence of infinitely many such $k$.  Both approaches, while invoking entirely different methodology, rely on relating limits (archimedean limits in his case, $p$-adic in ours) of linear combinations of the $L$-values in question to a different $L$-value known to be nonzero.
There is no clear way to extend Greenberg's approach, an intricate real-analytic argument involving Abel means, to our situation.  Likewise, there is no clear way to extend our approach, a $p$-adic argument requiring the existence of a nonvanishing critical value to the right of the central critical point, to his situation.  

\begin{rmk}
Application P.\ Monsky's results on the structure of $p$-adic power series rings, in particular \cite[Theorem 2.6]{monsky}, in our context gives a more precise description of the set of zeroes of these $p$-adic $L$-functions.  While we shall not need a more precise description in the present paper, this might be useful in attempts to combine horizontal and vertical variation to obtain still more refined nonvanishing statements.
\end{rmk}

We are now in a position to prove the main nonvanishing results.

\begin{thm}\label{ord-Thm}
Let $\pi$ be a representation meeting Condition \ref{cond-ord} (e.g.\ one of the infinitely many representations in \cite{HELS}).  For each critical point $\critical$ to the right of the central critical point $\central$ for $(\pi, \chi)$
\begin{align}\label{extra-term}
L(\central, \pi, \chi\omega^{\central-\critical}\normcharacter_{\Delta_\fixedconductor}^{\central-\critical}\psi)\neq 0
\end{align}
for all but finitely many finite order cyclotomic Hecke characters $\psi$ of $\Gamma^+$.  Moreover, for all but finitely many of those $\psi$, 
\begin{align*}
L(\central, \pi, \chi\omega^{\central-\critical}\normcharacter_{\Delta_\fixedconductor}^{\central-\critical}\psi\psi')\neq 0
\end{align*}
for all but finitely many unitary anti-cyclotomic characters $\psi'$ of $\Gamma^-$.
\end{thm}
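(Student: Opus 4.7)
The plan is to combine Lemma~\ref{existence-lemma2} with Proposition~\ref{all-or-none} in two stages: first to produce the all-but-finitely-many nonvanishing cyclotomic twists $\psi$ (the first claim), then, for each surviving $\psi$, to produce the all-but-finitely-many nonvanishing anti-cyclotomic twists $\psi'$ (the second claim).

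\emph{Input from temperedness.} Since $\pi$ is tempered and $\critical$ lies strictly to the right of $\central$, $L(\critical, \pi, \chi)\neq 0$, and the norm shift Equation~\eqref{norm-shift} rewrites this as $L(\central, \pi, \chi\normcharacter^{\central-\critical})\neq 0$. The Hecke character $\normcharacter^{\central-\critical}$ is cyclotomic, has nontrivial infinity type $(\central-\critical,\central-\critical)$ (as $\critical\neq\central$), and its restriction to $\Delta\times\Delta_\fixedconductor$ equals $\omega^{\central-\critical}\normcharacter^{\central-\critical}_{\Delta_\fixedconductor}$. Set $\fixedchar := \chi_{\Delta\times\Delta_\fixedconductor}\cdot\omega^{\central-\critical}\normcharacter^{\central-\critical}_{\Delta_\fixedconductor}$; by Condition~\ref{cond-ord}, $(\pi, \fixedchar)$ meets Condition~\ref{cond-pLfcnexists} and the $p$-adic $L$-function has no exceptional zero at the critical point $(\critical, \chi)$.

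\emph{Step 1 (first claim).} Apply Lemma~\ref{existence-lemma2} (equivalently Lemma~\ref{existence-lemma} with $k=\critical-\central$) to the pair $\chi\normcharacter^{-\central}$ and $\normcharacter^{\central-\critical}$: the required nonvanishing and non-exceptional-zero hypotheses were just verified, and $\normcharacter^{\central-\critical}$ is cyclotomic with infinity type $\neq(0,0)$. The conclusion provides infinitely many finite order cyclotomic Hecke characters $\psi$ with $\psi_{\Delta\times\Delta_\fixedconductor}=\omega^{\central-\critical}\normcharacter^{\central-\critical}_{\Delta_\fixedconductor}$ for which $L(\central,\pi,\chi\psi)\neq 0$; writing $\psi=\omega^{\central-\critical}\normcharacter^{\central-\critical}_{\Delta_\fixedconductor}\tilde\psi$ with $\tilde\psi$ a finite order cyclotomic character of $\Gamma^+$, this gives infinitely many such $\tilde\psi$ with $L(\central,\pi,\chi\omega^{\central-\critical}\normcharacter^{\central-\critical}_{\Delta_\fixedconductor}\tilde\psi)\neq 0$. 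Now apply Proposition~\ref{all-or-none}(\eqref{cyclo-generalization}) with base character $\chi\omega^{\central-\critical}\normcharacter^{\central-\critical}_{\Delta_\fixedconductor}$ (whose $\Delta\times\Delta_\fixedconductor$-component is $\fixedchar$ and is unaffected by $\tilde\psi$): either all finite order cyclotomic twists vanish or only finitely many do. The former is ruled out by the infinite family just produced, so only finitely many vanish, which is the first claim.

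\emph{Step 2 (second claim) and main obstacle.} Fix any $\tilde\psi$ surviving Step 1 and set $\chi_1:=\chi\omega^{\central-\critical}\normcharacter^{\central-\critical}_{\Delta_\fixedconductor}\tilde\psi$, so that $L(\central,\pi,\chi_1)\neq 0$ and $(\chi_1)_{\Delta\times\Delta_\fixedconductor}=\fixedchar$. Apply Proposition~\ref{all-or-none}(\eqref{prop4-generalization}) to $\chi_1$: either every finite order anti-cyclotomic twist of $L(\central,\pi,\chi_1\cdot)$ vanishes, or only finitely many do. The trivial anti-cyclotomic character is one of the admissible $\psi'$ and yields the nonzero value $L(\central,\pi,\chi_1)$, so we land in the second case, completing the second claim. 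The only real bookkeeping obstacle throughout is tracking the decomposition of $\normcharacter^{\central-\critical}$ into its $\Delta\times\Delta_\fixedconductor$-component (finite order, absorbed into $\fixedchar$) and its $\Gamma^+$-component (a non-finite-order principal-unit twist, which is what enables the Weierstrass-Preparation argument inside Lemma~\ref{existence-lemma2}); once that is set up cleanly, the theorem follows by direct assembly of the temperedness input, Condition~\ref{cond-ord}, Lemma~\ref{existence-lemma2}, and Proposition~\ref{all-or-none}.
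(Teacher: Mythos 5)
Your proposal is correct and is essentially identical in structure to the paper's own proof, which is stated in a single sentence: ``By Condition \ref{cond-ord}, the theorem follows from Lemma \ref{existence-lemma}, combined with Proposition \ref{all-or-none}.'' You have fleshed out exactly the intended argument: use temperedness plus the no-exceptional-zero part of Condition \ref{cond-ord} to get a non-exceptional nonvanishing value at $\critical$, push it through Lemma \ref{existence-lemma}/\ref{existence-lemma2} to obtain infinitely many nonvanishing cyclotomic twists, upgrade to ``all but finitely many'' via Proposition \ref{all-or-none}(\eqref{cyclo-generalization}), and then for each surviving $\psi$ feed the nonzero value at the trivial anti-cyclotomic character into Proposition \ref{all-or-none}(\eqref{prop4-generalization}). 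One minor bookkeeping remark: because the interpolation is normalized at $\measurecritical=0$, the $\fixedchar$ you actually invoke when citing Lemma \ref{existence-lemma2} is $(\chi\normcharacter^{-\critical})_{\Delta\times\Delta_\fixedconductor}$ rather than the $(\chi\normcharacter^{\central-\critical})_{\Delta\times\Delta_\fixedconductor}$ you wrote down; this is immaterial, since Condition \ref{cond-ord} gives Condition \ref{cond-pLfcnexists} for every character of $\Delta\times\Delta_\fixedconductor$, but it is worth being consistent when tracking the $\Delta\times\Delta_\fixedconductor$-components.
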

Note that if $\fixedconductor=(1)$ (or if $|\Delta_\fixedconductor|$ divides $\central-\critical$), then $\omega^{\central-\critical}\normcharacter_{\Delta_\fixedconductor}^{\central-\critical}=\omega^{\central-\critical}$.  So if we restrict to conductor dividing $p^\infty$ (or if $|\Delta_\fixedconductor|$ divides $\central-\critical$), we can eliminate the character $\normcharacter_{\Delta_\fixedconductor}$.
\begin{proof}
By Condition \ref{cond-ord},  the theorem follows from Lemma \ref{existence-lemma}, combined with Proposition \ref{all-or-none}.   \end{proof}

All tempered cuspidal automorphic representations ordinary at $p$ satisfy Condition \ref{cond-ord}, by the main result of \cite{HELS}.  Under the weaker Condition \ref{cond-pLfcnexists}, we obtain Theorem \ref{cond-Thm}.  Note that for tempered cuspidal automorphic representations $\pi$ on unitary groups, for each $\chi$ of type $A_0$, $L(s, \pi, \chi)\neq 0$ for $s$ to the right of the central critical point for $(\pi, \chi)$.  Therefore, with information about the exceptional zeroes (equivalently, the form of the modified Euler factors) of the associated $p$-adic $L$-function, we could obtain a stronger statement.

\begin{thm}\label{cond-Thm}
For each $(\pi, \fixedchar)$ meeting Condition \ref{cond-pLfcnexists} and type $A_0$ Hecke character $\chi$ of $\adeles_K^\times$ such that $\central$ is central for $(\pi, \chi)$ and such that $\chi\normcharacter^{-\central}_{\Delta\times\Delta_\fixedconductor}=\fixedchar$, there is a finite order Hecke character $\chi'$ of $\Gamma$ such that such that at the central critical point $\central$ for $(\pi, \chi)$, 
\begin{align}\label{cond-Thm-equ}
L(\central, \pi, \chi\chi')\neq 0
\end{align}
and
\begin{align*}
L(\central, \pi, \chi\chi'\psi)\neq 0
\end{align*}for all but finitely many unitary cyclotomic (resp.\ anti-cyclotomic) characters $\psi$ of $\Gamma^+$ (resp.\ $\Gamma^-$).  Furthermore, for all but finitely many such $\psi$, $L(\central, \pi, \chi\chi'\psi\psi')\neq 0$ for all but finitely many unitary anti-cyclotomic (resp.\ cyclotomic) characters $\psi'$ of $\Gamma^-$ (resp. $\Gamma^+$).

\end{thm}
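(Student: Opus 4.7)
Using the identity $L(\central, \pi, \eta) = L(0, \pi, \eta\normcharacter^{-\central})$, it suffices to produce a finite order character $\chi'$ of $\Gamma$ realizing the desired nonvanishing of $L(0, \pi, \chi\chi'\normcharacter^{-\central}\psi\psi')$. My plan is to exploit the fact that, by Condition \ref{cond-pLfcnexists} (the clause granting a $\chi_0$ at which neither $L(0,\pi,\chi_0)$ nor its $p$-adic avatar vanishes), the $p$-adic $L$-function $\mathscr{L}_{\pi,\fixedchar} \in \Lambda \otimes_{\ZZ_p} \cO \cong \cO[\![T_+, T_-]\!]$ is a nonzero power series, and then iterate the Weierstrass Preparation Theorem argument already used in the proof of Proposition \ref{all-or-none}.

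Set $\tilde{\chi} := (\chi\normcharacter^{-\central})|_\Gamma$ and $a_\pm := \tilde{\chi}(\gamma_\pm)$. Since each $\Gamma^\pm$ is pro-$p$, $a_\pm \in 1 + \mathfrak{m}_\cO$, so the substitution $T_\pm \mapsto a_\pm(1 + S_\pm) - 1$ defines an $\cO$-algebra automorphism of $\cO[\![T_+, T_-]\!] \cong \cO[\![S_+, S_-]\!]$. Let $\tilde{F}$ denote the image of $\mathscr{L}_{\pi,\fixedchar}$ under this substitution; it is again nonzero, and for any finite order $\psi_\pm$ on $\Gamma^\pm$ one has
\begin{equation*}
\tilde{F}(\psi_+(\gamma_+) - 1, \psi_-(\gamma_-) - 1) = \mathscr{L}_{\pi,\fixedchar}(\tilde{\chi}\psi_+\psi_-).
\end{equation*}
Writing $\tilde{F} = \sum_j b_j(S_+) S_-^j$ and letting $j_0$ be the smallest index with $b_{j_0} \neq 0$, the Weierstrass Preparation Theorem applied to $b_{j_0} \in \cO[\![S_+]\!]$ shows that only finitely many $p$-power roots of unity $\zeta_+$ satisfy $b_{j_0}(\zeta_+ - 1) = 0$. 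Outside this exceptional set, $\tilde{F}(\zeta_+ - 1, S_-)$ is a nonzero element of $\cO'[\![S_-]\!]$ for an appropriate finite extension $\cO'$ of $\cO$, and a second application of Weierstrass Preparation bounds the number of $\zeta_-$ with $\tilde{F}(\zeta_+ - 1, \zeta_- - 1) = 0$. Hence for all but finitely many finite order $\psi_+$ on $\Gamma^+$, for all but finitely many finite order $\psi_-$ on $\Gamma^-$, $\mathscr{L}_{\pi,\fixedchar}(\tilde{\chi}\psi_+\psi_-) \neq 0$; by symmetry the same holds with $+$ and $-$ reversed.

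To conclude, choose nontrivial finite order characters $\psi_+^{(0)}$ on $\Gamma^+$ and $\psi_-^{(0)}$ on $\Gamma^-$ lying in both good sets above, and set $\chi' := \psi_+^{(0)}\psi_-^{(0)}$. Because both components of $\chi'$ are nontrivial, every character $\chi\chi'\normcharacter^{-\central}\psi\psi'$ has unitary part with conductor divisible by $p$, so the no-exceptional-zeros clause of Condition \ref{cond-pLfcnexists} applies and the interpolation formula \eqref{Lfcn-comparison} converts $p$-adic nonvanishing into complex nonvanishing. This immediately gives Equation \eqref{cond-Thm-equ}. For the single-parameter statements, let $G \in \cO[\![S_+, S_-]\!]$ be the analogous shifted power series associated to $\tilde{\chi}\chi'$; because $G(0,0) = \mathscr{L}_{\pi,\fixedchar}(\tilde{\chi}\chi') \neq 0$, the slices $G(S_+, 0)$ and $G(0, S_-)$ are nonzero one-variable power series, so a single application of Weierstrass Preparation bounds the roots among $\zeta - 1$ at which they vanish. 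The joint nonvanishing statement follows by rerunning the two-variable Weierstrass argument above with $G$ in place of $\tilde{F}$.

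The main technical obstacle is ensuring that no exceptional zero of $\mathscr{L}_{\pi,\fixedchar}$ obstructs the translation from $p$-adic to complex $L$-values; this is cleanly sidestepped by the choice of both $\psi_\pm^{(0)}$ nontrivial, which forces $p$ into the conductor at every character of interest. A secondary bookkeeping point is that the Weierstrass Preparation Theorem must be applied over the finite extension $\cO'/\cO$ arising in the second slicing, which is routine since $\cO'$ is again a complete discrete valuation ring with residue characteristic $p$.
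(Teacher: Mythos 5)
Your argument is correct in spirit but takes a route genuinely different from the paper's, and there are two points of looseness that deserve attention.

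The paper's proof locates $\chi'$ by the measure-limit argument of Lemma \ref{existence-lemma2}: Condition \ref{cond-pLfcnexists} supplies a (not-necessarily-finite-order) $\chi''$ with $L(\central,\pi,\chi\chi'')\neq 0$ and with no exceptional zero, and $p$-adic continuity of $\mathscr{L}_{\pi,\fixedchar}$ as a measure then produces a finite order $\chi'$ in any sufficiently good approximating linear combination; the ``all but finitely many'' clauses are then obtained by invoking Proposition \ref{all-or-none}. Your proof instead finds $\chi'$ by an iterated Weierstrass Preparation argument on the shifted two-variable power series, without going through the approximation step, and then reruns the Weierstrass argument directly rather than citing Proposition \ref{all-or-none}. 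Both routes are legitimate; yours has the pleasant feature of making the nonvanishing set quantitative (finitely many exceptional $p$-power roots of unity in each variable) and of building the ``no exceptional zero'' requirement into the choice of $\chi'$ from the outset, whereas the paper's is more compact by leaning on the already-proved Lemma \ref{existence-lemma2} and Proposition \ref{all-or-none}.

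Two points need tightening. First, ``choose $\psi_+^{(0)}$, $\psi_-^{(0)}$ lying in both good sets above'' glosses over the fact that the exceptional set in the second variable depends on the choice made in the first. You cannot pick the two independently; you must iterate: pick $\psi_+^{(0)}$ outside the finite bad set $E_+$, which makes $\tilde F(\psi_+^{(0)}(\gamma_+)-1, S_-)$ a nonzero power series; apply WPT to that power series to get a finite bad set $E_-(\psi_+^{(0)})$; pick $\psi_-^{(0)}$ outside $E_-(\psi_+^{(0)})$ (and outside the finite bad set for the symmetric $-$-then-$+$ slicing). Only then is the value $\tilde F$ at the pair nonzero, and only then do the one-variable slices of the $\chi'$-shifted series $G$ have nonzero constant term. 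Second, the claim that ``every character $\chi\chi'\normcharacter^{-\central}\psi\psi'$ has unitary part with conductor divisible by $p$'' is too strong: $\chi\normcharacter^{-\central}$ has its own $\Gamma$-component, and it (or the varying $\psi$, $\psi'$) can cancel $\psi_+^{(0)}$ or $\psi_-^{(0)}$. For the ``all but finitely many'' clauses this only costs finitely many more exceptions, so the theorem survives; but for the single assertion \eqref{cond-Thm-equ} you should additionally arrange when selecting $\psi_\pm^{(0)}$ that the $\Gamma^\pm$-components of $\chi\chi'\normcharacter^{-\central}$ are each nontrivial (excluding at most one further bad value of $\psi_\pm^{(0)}$). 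With these two repairs the proposal is a complete and valid alternative to the paper's proof.
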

\begin{proof}
By Condition \ref{cond-pLfcnexists}, $L(\central, \pi, \chi\chi'')\neq 0$ for some Hecke character $\chi''$ on $\Gamma$ with $\measurecritical$ critical for $\chi\chi''\normcharacter^{-\central}$ and such that $\chi\chi''\normcharacter^{-\central}$ is not a zero of the $p$-adic $L$-function.  Taking limits over linear combinations of finite order characters on $\Gamma$ approaching $\chi''$, similarly to the proof of Lemma \ref{existence-lemma2}, we find a finite order character $\chi'$ occurring in the linear combinations such that $L(\central, \pi, \chi\chi')\neq 0$.  The theorem then follows from Proposition \ref{all-or-none}. 
\end{proof}

\section{Appendix: The setting of unitary groups when $p$ is unramified}\label{app-section}
From a complex analytic perspective, when working with unitary groups, the splitting condition imposed at $p$ might appear unmotivated.  This section discusses the extent to which that splitting condition can be removed, in particular sketching how to remove it in the case of signature $(n,n)$ and explaining why that argument does not fully extend to other signatures.

\subsection{The significance of the assumption that $p$ splits completely and recent progress toward removing it}\label{recent}

We begin by highlighting the ways in which the condition that $p$ splits completely manifests itself in the $p$-adic theory employed here, including in \cite{apptoSHL, HELS}:
\begin{itemize}
\item{{\it Nonemptiness of the ordinary locus.}  Hida's theory of $p$-adic automorphic forms (\cite{Hida}) is built over the ordinary locus of the moduli space over which the automorphic forms used to construct our $L$-functions lie.  When $p$ splits completely, the ordinary locus of is nonempty.  More generally, though, the nonordinary locus is empty.  Very recently, a substitute was built over the {\it $\mu$-ordinary} locus (which is nonempty whenever $p$ is unramified and agrees with the ordinary locus when $p$ splits \cite{wedhorn}).  Extension of Hida's approach to $p$-adic automorphic forms was introduced in \cite{EiMa, DSG, DSG2}, which also extend crucial earlier work on differential operators \cite{EDiffOps, EFMV} to this setting.  Further, essential results on Hecke operators and Hida's ordinary projector were extended to this setting in \cite{brasca-rosso, hernandez1}.}

\item{{\it Choice of a Siegel section at $p$ that is amenable to computation of Fourier coefficients at $p$ as well as the local zeta integrals arising from the doubling method.}  The strategy for computing local integrals arising in the doubling method is always to reduce them to integrals computed by Godement and Jacquet, which are only worked out for $\GL_n$ in \cite{jacquet} and certain other cases (neither unitary nor symplectic groups) in \cite{godement-jacquet}.\footnote{As an aside, we note that the Godement and Jacquet's assessment of their own work in the last full paragraph on \cite[p. V]{godement-jacquet} is prescient: {\it No doubt that the results developed here will someday disappear in the general theory of Euler products associated to automorphic forms.  No doubt also that this work is, at the moment, incomplete.  But we feel that its present publication could be of some use to the mathematical community.}  Indeed, other cases were later needed and developed, using \cite{godement-jacquet} as a solid foundation and inspiration.}  When the doubling method is being applied in the case of a general linear group, the reduction to the case of Godement--Jacquet is completed in \cite[Section 5]{GPSR}.  Then \cite[Section 6]{GPSR} explains how to carry out local doubling integrals for particular choices in the case of symplectic groups, by reducing to a calculation over the Levi subgroup, which is isomorphic to a general linear group, thus allowing reduction once again to Godement--Jacquet integrals for general linear groups.   

In the case where $p$ splits completely, the unitary group is isomorphic to a general linear group.  The calculations of the local zeta integrals in \cite{HELS} (which relate the zeta integrals to Euler factors of standard Langlands $L$-functions) and the calculations of the Fourier coefficients of Eisenstein series in \cite{apptoSHL} (which enables $p$-adic interpolation) both use a change of variables that relies on an isomorphism of the unitary group with a general linear group.

On the other hand, at primes that do not split completely, the unitary group is not isomorphic to a general linear group.  Computations of local doubling integrals for choices similar to those in \cite[Section 6]{GPSR} were completed for unitary groups of signature $(n,n)$ at inert primes in \cite[Section 3]{JSLi} by reduction to the computations similar to those carried out in the symplectic case in \cite[Section 6]{GPSR} (using the close relationship between $GSp_{2n}$ and unitary groups of signature $(n,n)$).

While the choices in \cite[Section 3]{JSLi} are not well-suited to $p$-adic interpolation, appropriate choices were made for symplectic groups in \cite{BS, liuJussieu, liu-rosso}.  Similarly to the approach \cite[Section 3]{JSLi}, those sections can then be adapted to the case of unitary groups of signature $(n,n)$, and continuing to use the close relationship between $GSp_{2n}$ and unitary groups of signature $(n,n)$, those sections also enable the $p$-adic interpolation of the Fourier coefficients necessary to construct the $p$-adic $L$-functions.}
\end{itemize}

\subsection{Sketch of how to extend the $p$-adic $L$-functions to inert primes $p$ for unitary groups of signature $(n,n)$ and why this approach does not immediately extend to other signatures}

The developments from Section \ref{recent} enable construction of $p$-adic $L$-functions whenever $p$ is unramified, so long as the signature of the unitary group is $(n,n)$:  As noted above (and observed in \cite[Section 3]{JSLi}), for inert primes and a unitary group $G$ signature $(n,n)$, the unitary case is sufficiently close to the symplectic case that similar choices (of local data at $p$) to those in \cite[Section 5]{liuJussieu} and \cite[Section 2.8]{liu-rosso} can be used to compute local zeta integrals arising from the doubling method, as well as to construct an Eisenstein measure on a unitary group of signature $(2n, 2n)$.  

For unitary groups $G$, there is a subtlety that does not arise for symplectic groups: We need to interpolate the pullback of an Eisenstein series constructed over $GU(2n, 2n)$ (used to construct the $L$-functions in the doubling method) to $G\times G$ (a copy of two unitary groups) inside $GU(2n, 2n)$.  The interpolation only is carried out over the ordinary locus.  As explained above, we can extend interpolation to the $\mu$-ordinary locus  (necessarily employing the associated notion of $P$-ordinariness, in place of ordinariness, as defined in \cite{HidaAsian, brasca-rosso, liu-rosso}), but the $\mu$-ordinary locus for $G\times G$ intersects the $\mu$-ordinary locus for $GU(2n, 2n)$ if and only if $G$ is of signature $(n,n)$.  Consequently, in the inert case, without substantial additional work, the above methods only extend the construction of the $p$-adic $L$-functions to signature $(n,n)$.

Each of \cite{HELS, apptoSHL, apptoSHLvv, EFMV} relies on the assumption that each prime above $p$ in the maximal totally real field $\realfield$ of the CM field $\cmfield$ splits in $\cmfield$, and in those papers, a choice of CM type for $\cmfield$ is identified with with a set of primes (one from each complex conjugate pair) of $\cmfield$ above the primes in $\realfield$ over $p$.  To set ourselves up to handle the more general case where $p$ is merely required to be unramified, we now provide careful bookkeeping that will serve as a replacement for that identification from the split case.

Since $\cmfield$ is a CM field, each embedding $\sigma: \realfield\hookrightarrow \bar{\IQ}$ extends in exactly two ways to an embedding $\cmfield\hookrightarrow\bar{\IQ}$.  We let $\Sigma$ be a CM type.  That is, $\Sigma$ is a set consisting of embeddings $\cmfield\hookrightarrow\bar{\IQ}$ such that for each $\sigma: \realfield\hookrightarrow\bar{\IQ}$, exactly one of the two extensions of $\sigma$ to $\cmfield$ is in $\Sigma$.  So, letting $\Sigma_{\realfield}$ denote the set of embeddings $\sigma: \realfield\hookrightarrow \bar{\IQ}$ (and, more generally, letting $\Sigma_L$ denote the set of embeddings $\sigma: L\hookrightarrow \bar{\IQ}$ for any number field $L$), we have a bijection
\begin{align*}
\Sigma&\rightarrow\Sigma_{\realfield}\\
\tau&\mapsto \tau|_{\realfield}.
\end{align*}
Each embedding $\iota_p\circ\tau:\cmfield\hookrightarrow\bar{\IC}_p$ determines a valuation on $\cmfield$ corresponding to a prime of $\OK$, which we denote by $\mathfrak{p}_\tau$.  That is,
\begin{align*}
\mathfrak{p}_\tau=\left\{a\in\cmfield| |\tau(a)|_p<1\right\}.
\end{align*}

Let 
\begin{align*}
\Sigma_p:=\left\{\mathfrak{p}_\tau|\tau\in\Sigma\right\}.
\end{align*}
Note that if $c$ is the nontrivial involution of $K$ in $\Gal\left(\cmfield/\realfield\right)$, then
\begin{align}\label{Galactiononprimes}
c\left({\mathfrak{p}_\tau}\right) = \mathfrak{p}_{\tau\circ c}.
\end{align}
So for each prime ideal $\mathfrak{p}$ in $\CO_{\realfield}$ over $p$, $\Sigma_p$ contains exactly one prime in $\OK$ over $\mathfrak{p}$.  Note also that
\begin{align*}
\Sigma&\rightarrow\Sigma_p\\
\tau&\mapsto \mathfrak{p}_\tau
\end{align*}
is always surjective, but it is bijective if and only if $p$ splits completely in $\realfield$.  Define
\begin{align*}
\overline{\Sigma}:=\left\{\tau\circ c| \tau\in\Sigma_{\cmfield}\right\}\\
\overline{\Sigma}_p:=\left\{c(\mathfrak{p})|\mathfrak{p}\in \Sigma_p\right\}.
\end{align*}
So
\begin{align*}
\overline\Sigma \cap \Sigma &= \emptyset\\
\overline\Sigma \cup \Sigma & = \Sigma_\cmfield\\
\overline{\Sigma}_p\cup\Sigma_p & = \left\{\mathfrak{p}|\mathfrak{p} \mbox{ is a prime ideal over $p$ in } \OK\right\}.
\end{align*}
Observe that $\Sigma_p$ is in bijective correspondence with
\begin{align*}
\Sigma_{p, \realfield}:=\left\{\mathfrak{p}|\mathfrak{p} \mbox{ is a prime ideal over $p$ in } \CO_{\realfield}\right\}
\end{align*}
via
\begin{align*}
\Sigma_p&\rightarrow\Sigma_{p, \realfield}\\
\mathfrak{p}&\mapsto \mathfrak{p}\cap\CO_{\realfield}.
\end{align*}
Likewise, we have a bijection
\begin{align*}
\overline{\Sigma}_p&\rightarrow\Sigma_{p, \realfield}\\
\mathfrak{p}&\mapsto \mathfrak{p}\cap\CO_{\realfield}.
\end{align*}

Note that $\overline{\Sigma}_p\cap\Sigma_p=\emptyset$ if and only if each prime over $p$ in $\CO_{\realfield}$ splits in $\cmfield$.  By Equation \eqref{Galactiononprimes}, we see that $\overline{\Sigma}_p$ contains the set of primes in $\OK$ determined by the embeddings in $\overline\Sigma$.  So we also have that the set of valuations determined by $\Sigma$ and the set of valuations determined by $\overline{\Sigma}$ are disjoint if and only if every prime in $\CO_{\realfield}$ over $p$ splits in $\cmfield$.

Following the convention of \cite[Section 5.2]{HELS}, let 
\begin{align*}
X_p&:=\varprojlim_r \cmfield^\times\backslash\left(K\otimes\hat\ZZ\right)^\times/U_r\\
U_r&:=\left(\CO\otimes\hat\ZZ^{(p)}\right)^\times\times\left(1+p^r\CO\otimes\ZZ_p\right)\subset\left(\cmfield\otimes\hat\ZZ\right)^\times,
\end{align*}
and identify $X_p$ with the Galois group of the maximal abelian extension of $\cmfield$ unramified away from $p$.  

Consider a Hecke character
\begin{align*}
\chi = \prod_v\chi_v: K^\times\backslash\adeles_K^\times\rightarrow\IC^\times,
\end{align*}
of type $A_0$ with infinity type $\chi_\infty=\prod_{\tau\in\Sigma}\chi_\tau$
Since $\chi$ is of type $A_0$, there are integers $k$ and $\nu_\tau$ such that each $\chi_\tau$ is of the form
\begin{align*}
\chi_\tau: \IC^\times&\rightarrow\IC^\times\\
z&\mapsto \tau(z)^{-k-\nu_\tau}{\tau(c(z))}^{\nu_\tau}.
\end{align*}
 We may write
\begin{align*}
\chi_\infty = \prod_{\mathfrak{p}\in\Sigma_p}\left(\prod_{\left\{\tau\in\Sigma|\mathfrak{p}_\tau=\mathfrak{p}\right\}}\chi_\tau\right).
\end{align*}

The component of $\chi$ at $p$ is a character
\begin{align*}
\chi_p: \left(\cmfield\otimes_{\ZZ}\ZZ_p\right)^\times\rightarrow \IC^\times,
\end{align*}
which we can decompose as a product over primes $\mathfrak{P}$ in $\cO_{\realfield}$ that divide $p$
\begin{align*}
\chi_p=\prod_{\mathfrak{P}}\chi_{\mathfrak{P}},
\end{align*}
with characters
\begin{align*}
\chi_{\mathfrak{P}}:  \left(\OK\otimes_{\ZZ}\cO_{\realfield, \mathfrak{P}}\right)^\times\rightarrow \IC^\times,
\end{align*}
where $\cO_{\realfield, \mathfrak{P}}$ denotes the completion of $\cO_{\realfield}$ at $\mathfrak{P}$.  For each prime ideal $\mathfrak{P}$ in $\cO_{\realfield}$ that divides $p$, we define
\begin{align*}
\tilde{\chi}_{\mathfrak{P}}: \left(\OK\otimes_{\ZZ}\cO_{\realfield, \mathfrak{P}}\right)^\times\rightarrow \IC_p^\times
\end{align*}
by
\begin{align*}
\tilde{\chi}_{\mathfrak{P}}:=\chi_{\mathfrak{P}}\cdot\prod_{\left\{\tau\in\Sigma|\mathfrak{p}_\tau\divides\mathfrak{P}\right\}}\tilde{\chi}_\tau
\end{align*}
where
\begin{align*}
\tilde{\chi}_\tau:  \left(\OK\otimes_{\ZZ}\cO_{\realfield, \mathfrak{P}}\right)^\times&\rightarrow \IC_p^\times
\end{align*}
is the continuous character defined for all $z\in K^\times$ relatively prime to $p$ by
\begin{align*}
z&\mapsto \tau(z)^{-k-\nu_\tau}\tau(c(z))^{\nu_\tau}.
\end{align*}

We define the $p$-adic avatar
\begin{align*}
\tilde{\chi}:X_p\rightarrow {\IC_p}^\times
\end{align*}
of $\chi$ by 
\begin{align*}
\tilde{\chi}\left(\left(a_v\right)_v\right) &:= \prod_{v\ndivides p\infty \mbox{ primes in } \OK}\chi_v(a_v)
\prod_{\mathfrak{p}\divides p \mbox{ primes in } \cO_{\realfield}}\tilde{\chi}_\mathfrak{p}\left(\left(a_{\mathfrak{p}}\right)\right)\\
\tilde{\chi}_\mathfrak{p}\left(a_\mathfrak{p}\right) &:= \chi_\mathfrak{p}\left(a_\mathfrak{p}\right)\prod_{\left\{\tau\in\Sigma|\mathfrak{p}_\tau=\mathfrak{p}\right\}}\chi_\tau\left(a_\mathfrak{p}\right).
\end{align*}

Now, with the bookkeeping from above, the choice of data and Euler factors at primes in $\Sigma_{p, \realfield}$ that split in $K$ are treated as in \cite{HELS}, while (following the observation of \cite[Section 3]{JSLi} that at inert places, the unitary group of signature $(n,n)$ can be handled similarly to a symplectic group) the choice of data and Euler factors at inert places in $\Sigma_{p, \realfield}$ are treated similarly to those in \cite[Section 5]{liuJussieu} (and, more generally, for the $P$-ordinary case, following the generalization in \cite[Section 2]{liu-rosso}).

\bibliography{OrdinaryLfcnsbib}

\end{document}